\theoremstyle{plain}
\newtheorem{acknowledgement}{Acknowledgement}
\def\R{{\Bbb R}} 
  \def\laplacian{\bigtriangleup}
\def\loc{\text{loc}}  
\def\intave#1{-\kern-10.7pt\int_{\,#1}}
\def\D{\nabla}
\def\g{\gamma}
\def\wc{\rightharpoonup}
\def\<{\langle}
\def\({\left(}
\def\){\right)}
\def\intave#1{-\kern-10.7pt\int_{\,#1}}
\def \cal{\mathcal}
\begin{document}
\title{A New Approximation of   Relaxed Energies for
 Harmonic Maps and   the Faddeev Model}

\numberwithin{equation}{section}
\author{Mariano Giaquinta, Min-Chun Hong and Hao Yin}
\address{Mariano Giaquinta, Scuola Normale Superiore,
56126 Pisa, Italy}  \email{m.giaquinta@sns.it}

\address{Min-Chun Hong, Department of Mathematics, The University of Queensland\\
Brisbane, QLD 4072, Australia}  \email{hong@maths.uq.edu.au}

\address{Hao Yin, Department of Mathematics, The University of Queensland\\
Brisbane, QLD 4072, Australia and Department of Mathematics,
Shanghai Jiaotong University, Shanghai, China}
\email{yinhao78@gmail.com}

\begin{abstract}
 {We propose a new approximation
for  the relaxed energy $E$ of the Dirichlet energy and prove that
 the minimizers of the approximating functionals converge to a
 minimizer $u$ of the relaxed energy, and that $u$ is  partially regular
without using the concept of Cartesian currents.
 We also use the same approximation method to study
 the variational problem of the relaxed energy for the
Faddeev model and prove the existence of minimizers  for the relaxed
energy $\tilde{E}_F$ in  the class of maps with Hopf degree $\pm
1$. }\end{abstract}
 \subjclass{AMS 58E20, 58E50}
 \keywords{Relaxed energy, Harmonic maps, Faddeev model}

 \maketitle

\pagestyle{myheadings} \markright {} \markleft { }

\vfuzz2pt 
\hfuzz2pt 

\newtheorem{thm}{Theorem}[section]
\newtheorem{cor}[thm]{Corollary}
\newtheorem{lem}[thm]{Lemma}
\newtheorem{prop}[thm]{Proposition}
\newtheorem{defn}[thm]{Definition}
\newtheorem{rem}[thm]{Remark}

\newcommand{\norm}[1]{\left\Vert#1\right\Vert}
\newcommand{\nnorm}[1]{\interleave #1 \interleave}
\newcommand{\abs}[1]{\left\vert#1\right\vert}
\newcommand{\set}[1]{\left\{#1\right\}}
\newcommand{\Real}{\mathbb R}
\newcommand{\eps}{\varepsilon}
\newcommand{\To}{\longrightarrow}
\newcommand{\BX}{\mathbf{B}(X)}
\newcommand{\A}{\mathcal{A}}
\newcommand{\pfrac}[2]{\frac{\partial #1}{\partial #2}}

\section{Introduction}

Let $\Omega\subset\R^3$ be an open bounded domain with smooth
boundary $\partial \Omega$. For each $p>0$, set
$$W^{1,p}(\Omega ; S^2) =\left \{ u\in W^{1,p}(\Omega ; \R^3): \,\, |u|=1\text { a.e. on } \Omega .\right \}
$$
For a map $u\in W^{1, 2}(\Omega ; S^2)$, we consider
 \begin{equation}
     E(u, \Omega )=\int_{\Omega}|\nabla u|^2\,dx.  \end{equation}
A map $u\in W^{1,2}(\Omega ; S^2)$ is said to be a harmonic map if
$u$ satisfies
$$ \laplacian u+|\nabla u|^2 u=0 $$
in the weak sense.

In \cite {HL}, Hardt-Lin discovered a gap phenomenon for the
Dirichlet energy; i.e., there is a given smooth boundary value $\g:
\partial\Omega\to S^2$ with $\text {deg } \g =0$ such that
$$\min_{u\in W^{1,2}_{\gamma}(\Omega , S^2)}\int_{\Omega}|\D u|^2\,dx <\inf_{v\in W^{1,2}_{\g}\cap
C^{\infty}(\bar\Omega , S^2)}\int_{\Omega} |\D u|^2\,dx. $$ It is
a very interesting problem whether the above right-hand term
$$\inf_{v\in H^1_{\g}\cap C^{\infty}(\bar \Omega , S^2)}\int_{\Omega} |\D
u|^2\,dx$$ can be attained by a map  $u\in  W^{1,2}_{\g}\cap
C^{\infty}(\bar \Omega , S^2)$ or not. For $u\in  W^{1,2}(\Omega ; S^2)$   we consider  the vector field
 $D(u)$ given by
$$D(u):=(u\cdot u_{x_2}\times u_{x_3},u\cdot u_{x_3}\times u_{x_1},
u\cdot u_{x_1}\times u_{x_2}). $$
Given
$u_0\in C^{\infty}_{\g}(\bar \Omega ; S^2)$ for a
map $u\in  W^{1,2}_{\g}(\bar \Omega ; S^2)$, we set
 $$ L(u):= L(u,u_0)=\frac 1 {4\pi}\sup_{\xi :\Omega\to \R, \|\D\xi\|_{L^{\infty}}\leq 1}
\int_{\Omega} [D(u)-D(u_0)]\cdot \D\xi\,dx .
$$
The relaxed energy functional for the Dirichlet energy $E(u)$ is then
defined by
$$F(u)=\int_{\Omega}|\D u|^2\,dx+8\pi L(u). $$
Bethuel-Brezis-Coron in \cite {BBC} proved that $F$ is
sequentially lower semi-continuous and satisfies
 \begin{equation} \label{1.2}\inf_{u\in W^{1,2}_{\g}\cap
C^{\infty} (\bar \Omega , S^2)}\int_{\Omega} |\D
u|^2\,dx=\min_{u\in W^{1,2}_{\gamma}(\Omega ,S^2)}  F(u). \end{equation}
Moreover, each minimizer of $F$ in $W^{1,2}_{\g}(\Omega
; S^2)$ is also a weak harmonic map.

There is an interesting problem whether a minimizer of the relaxed
energy $F$ in (\ref {1.2}) is regular.  Giaquinta, Modica and
Soucek in \cite {GMS1} proved that a minimizer $u$ of the relaxed
energy $F$ in $W^{1,2}_{\gamma}(\Omega ; S^2)$ is smooth in a set
$\Omega_0\subset  \Omega$ and ${\cal
H}^{1}(\Omega\backslash\Omega_0)<\infty$, where ${\cal H}^{1}$ is
the Hausdorff measure. Bethuel and Brezis in \cite {BB} proved
that minimizers for a modified relax energy $F_{\lambda}(u)
=\int_{\Omega}|\D u|^2\,dx+\lambda 8\pi L(u)$ with $0\leq \lambda
<1$ in $W^{1,2}_{\gamma}(\Omega ; S^2)$ have  at most isolated
singularities.

In the first part of this paper, we propose a new approximation
for  the relaxed energy $F$ of the Dirichlet energy. More
precisely, for a parameter $\varepsilon$, we consider the family of
functionals
  \begin{equation}  E_{\varepsilon} (u;\Omega )=\int_{\Omega}\left (|\nabla u|^2+\varepsilon^2 |\nabla u |^4\right
)\,dx
  \end{equation}
for  maps $u \in W^{1,4}$. Of course, there is a minimizer $u_{\varepsilon}$
of $E_{\varepsilon}$ in $W^{1,4}_{\gamma}(\Omega ; S^2)$; i.e.,
$$E_{\varepsilon} (u_{\varepsilon} ;\Omega )\leq E_{\varepsilon} (v;\Omega
);\quad \forall v\in   W^{1,4}_{\gamma}(\Omega ; S^2)
$$
and it can be proved that each $u_{\varepsilon}$ is smooth. We shall prove the following.

\begin{thm}
    \label{thm:approx}
 For $\varepsilon >0$, let
$u_{\varepsilon}$ be a minimizer of $E_{\varepsilon}$ in
$W^{1,4}_{\gamma}(\Omega ; S^2)$. As $\varepsilon_k\to 0$,
minimizers $u_{\varepsilon_k}$ weakly converge  (possible passing
subsequence) to a minimizer $u$ of the relaxed energy $F$ in
$W^{1,2}_{\gamma}(\Omega ; S^2)$. Moreover, $u$ is harmonic and
smooth in an open subset $\Omega_0\subset \Omega$ with $\cal
H^1_{\loc}(\Omega\backslash \Omega_0)<+\infty $.
\end{thm}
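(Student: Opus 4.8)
The plan is to establish, in order, the uniform energy bound, the identification of the weak limit, the minimality of the limit for $F$, and the partial regularity; the harmonicity of $u$ will then come for free from \cite{BBC}.

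\emph{Uniform bounds and the limit.} For any $v\in W^{1,2}_{\gamma}\cap C^{\infty}(\bar\Omega;S^2)$ we have $v\in W^{1,4}$, hence
\[
E_{\varepsilon}(u_{\varepsilon};\Omega)\le E_{\varepsilon}(v;\Omega)=\int_{\Omega}|\nabla v|^2\,dx+\varepsilon^2\int_{\Omega}|\nabla v|^4\,dx .
\]
Letting $\varepsilon\to0$ and then passing to the infimum over such $v$, \eqref{1.2} gives $\limsup_{\varepsilon\to0}E_{\varepsilon}(u_{\varepsilon};\Omega)\le\min_{W^{1,2}_{\gamma}(\Omega;S^2)}F$; in particular $\sup_{\varepsilon}\int_{\Omega}|\nabla u_{\varepsilon}|^2\,dx<\infty$ and $\delta_{\varepsilon}:=\varepsilon^2\int_{\Omega}|\nabla u_{\varepsilon}|^4\,dx\to0$. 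Hence, along a subsequence $\varepsilon_k\to0$, $u_{\varepsilon_k}\wc u$ in $W^{1,2}(\Omega;\R^3)$ and, by Rellich, $u_{\varepsilon_k}\to u$ in $L^2$, so $|u|=1$ a.e.; since the trace operator is weak--weak continuous and $u_{\varepsilon_k}=\gamma$ on $\partial\Omega$, we obtain $u\in W^{1,2}_{\gamma}(\Omega;S^2)$.

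\emph{Minimality and harmonicity.} Each $u_{\varepsilon}$ is smooth, so $\Div D(u_{\varepsilon})=0$ in $\Omega$ (the familiar identity for smooth $S^2$-valued maps, where one uses that $u_{\varepsilon}$ together with its first partial derivatives spans a space of dimension $\le 2$), and $D(u_{\varepsilon})\cdot\nu$ on $\partial\Omega$ depends only on $u_{\varepsilon}|_{\partial\Omega}=\gamma=u_0|_{\partial\Omega}$. Thus $D(u_{\varepsilon})-D(u_0)$ is divergence free with vanishing normal trace, so $\int_{\Omega}(D(u_{\varepsilon})-D(u_0))\cdot\nabla\xi\,dx=0$ for every Lipschitz $\xi$, i.e.\ $L(u_{\varepsilon})=0$ and $F(u_{\varepsilon})=\int_{\Omega}|\nabla u_{\varepsilon}|^2\,dx\le E_{\varepsilon}(u_{\varepsilon};\Omega)$. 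By the sequential weak lower semicontinuity of $F$ on $W^{1,2}_{\gamma}(\Omega;S^2)$ \cite{BBC},
\[
F(u)\le\liminf_{k}F(u_{\varepsilon_k})\le\limsup_{k}E_{\varepsilon_k}(u_{\varepsilon_k};\Omega)\le\min_{W^{1,2}_{\gamma}(\Omega;S^2)}F ,
\]
and since $u\in W^{1,2}_{\gamma}(\Omega;S^2)$ equality must hold: $u$ minimizes $F$, hence by \cite{BBC} it is a weak harmonic map.

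\emph{Partial regularity.} The heart of the matter is a uniform $\varepsilon$-regularity estimate: there exist $\eta_0>0$ and $C_0$, \emph{independent of $\varepsilon$}, such that whenever $B_r(x_0)\subset\Omega$ and $r^{-1}\int_{B_r(x_0)}(|\nabla u_{\varepsilon}|^2+\varepsilon^2|\nabla u_{\varepsilon}|^4)\,dx\le\eta_0$, the rescaled map $y\mapsto u_{\varepsilon}(x_0+\tfrac r2 y)$ is bounded in $C^{1,\alpha}(B_1)$ by $C_0$; in particular $\sup_{B_{r/2}(x_0)}|\nabla u_{\varepsilon}|\le 2C_0/r$. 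One ingredient is the almost-monotonicity
\[
\rho^{-1}\!\!\int_{B_\rho(x_0)}\!(|\nabla u_{\varepsilon}|^2+\varepsilon^2|\nabla u_{\varepsilon}|^4)\,dx
\le r^{-1}\!\!\int_{B_r(x_0)}\!(|\nabla u_{\varepsilon}|^2+\varepsilon^2|\nabla u_{\varepsilon}|^4)\,dx+C\delta_{\varepsilon}(\rho^{-1}-r^{-1}),\quad \rho<r,
\]
which follows from the inner-variation (stress--energy) identity for the minimizer $u_{\varepsilon}$. After rescaling $B_r(x_0)$ to $B_1$ (which turns $\varepsilon$ into $\varepsilon'=\varepsilon/r$ and leaves the scaled energy unchanged), a Schoen--Uhlenbeck-type energy-decay argument then applies to the minimizer $v$ of $\int_{B_1}(|\nabla v|^2+(\varepsilon')^2|\nabla v|^4)\,dx$: at scales $\rho\gtrsim\varepsilon'$ one compares $v$ on spheres with competitors that are radially homogeneous outside a core of size $\sim\varepsilon'$ (whose extra $L^4$-cost is $O(\varepsilon')$), while at scales $\rho\lesssim\varepsilon'$ a dilation by $\varepsilon'$ turns the functional into the fixed non-degenerate $\int(|\nabla\cdot|^2+|\nabla\cdot|^4)$, for whose small-energy minimizers $C^{1,\alpha}$-estimates are classical; splicing the two ranges yields Morrey decay and then the asserted bound. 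Granting this, let $|\nabla u_{\varepsilon_k}|^2\,dx$ converge weakly-$\ast$ to a finite Radon measure $\mu$ on $\overline\Omega$ and put $\Sigma:=\{x_0\in\Omega:\ \limsup_{r\to0}r^{-1}\mu(\overline{B_r(x_0)})\ge\eta_0\}$. If $x_0\notin\Sigma$, pick $r_1$ with $r_1^{-1}\mu(\overline{B_{r_1}(x_0)})<\eta_0$; by upper semicontinuity of $x\mapsto\mu(\overline{B_{r_1}(x)})$ the same holds, with room to spare, for all $x$ in a neighbourhood $V$ of $x_0$, and since $\mu(\overline{B_{r_1}(x)})\ge\limsup_k\int_{B_{r_1}(x)}|\nabla u_{\varepsilon_k}|^2\,dx$ while $\varepsilon_k^2\int_{B_{r_1}(x)}|\nabla u_{\varepsilon_k}|^4\,dx\le\delta_{\varepsilon_k}\to0$, the $\varepsilon$-regularity hypothesis holds for $u_{\varepsilon_k}$ on every $B_{r_1}(x)$, $x\in V$, once $k$ is large. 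The resulting uniform $C^{1,\alpha}$-bounds and the Ascoli theorem give $u_{\varepsilon_k}\to u$ in $C^1_{\loc}(V)$, so $u$ is a $C^1$ weak harmonic map on $V$, hence smooth there by elliptic bootstrapping. Therefore $\mathrm{Sing}(u)\subseteq\Sigma$, the set $\Omega_0:=\Omega\setminus\mathrm{Sing}(u)$ is open, and the standard covering lemma for upper densities gives $\mathcal H^1(\Sigma\cap\Omega')\le C\eta_0^{-1}\mu(\overline\Omega)<\infty$ for every $\Omega'\Subset\Omega$; in particular $\mathcal H^1_{\loc}(\Omega\setminus\Omega_0)<\infty$.

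\emph{Main obstacle.} Everything apart from the uniform $\varepsilon$-regularity estimate is soft; that estimate is the real difficulty, because in dimension three the two terms of $E_{\varepsilon}$ scale oppositely, so there is no single scale-invariant monotone quantity, and the small-energy regularity scheme has to be carried out across the crossover scale $\rho\sim\varepsilon$ while keeping $\eta_0$, $C_0$ and all the bootstrap constants bounded as $\varepsilon\to0$.
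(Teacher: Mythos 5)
Your first two steps (uniform energy bound, identification of the weak limit and its boundary values, the observation that $L(u_{\varepsilon})=0$ because the smooth minimizers have divergence-free $D(u_{\varepsilon})$ with the right normal trace, lower semicontinuity of $F$, hence minimality and weak harmonicity of $u$) coincide with the paper's argument and are fine. The problem is the partial-regularity step. You hang everything on a uniform $\varepsilon$-regularity estimate --- small scaled energy on $B_r(x_0)$ implies a $C^{1,\alpha}$ bound on $B_{r/2}(x_0)$ with constants independent of $\varepsilon$ --- and you do not prove it: the two-regime sketch (comparison maps that are radially homogeneous outside a core of size $\sim\varepsilon$, a blow-up to the fixed functional $\int(|\nabla\cdot|^2+|\nabla\cdot|^4)$ whose small-energy $C^{1,\alpha}$ theory for $S^2$-valued minimizers you call ``classical,'' and a splicing across the crossover scale) is a program, not an argument, and you yourself write ``granting this'' and flag it as the main obstacle. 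Keeping $\eta_0$, $C_0$ and all iteration constants uniform as $\varepsilon\to0$ is precisely where the difficulty lives, so as written the conclusion about $\Omega_0$ rests on an unproved lemma of Schoen--Uhlenbeck strength.

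The paper shows that this lemma is not needed, which is the whole point of its ``new approach.'' From the inner-variation identity (Lemma 2.1) one gets for the minimizers
\[
\rho^{-1}\int_{B_\rho(y)}|\nabla u_{\varepsilon_k}|^2\,dx
\le R^{-1}\int_{B_R(y)}\bigl(|\nabla u_{\varepsilon_k}|^2+\varepsilon_k^2|\nabla u_{\varepsilon_k}|^4\bigr)\,dx
+2\Bigl(\varepsilon_k^2\int_{\Omega}|\nabla u_{\varepsilon_k}|^4\,dx\Bigr)\rho^{-1},
\]
and since $\varepsilon_k^2\int_{\Omega}|\nabla u_{\varepsilon_k}|^4\,dx\to0$ (your $\delta_{\varepsilon_k}$), for each \emph{fixed} $\rho>0$ the error term vanishes in the limit. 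Hence near a point off the concentration set the smallness $\rho^{-1}\int_{B_\rho(y)}|\nabla u|^2\,dx<2\varepsilon_0$ passes to the \emph{limit map} at every centre $y$ and every scale $\rho$, by weak lower semicontinuity of the $L^2$ norm. Since $u$ is already known to be weakly harmonic (it minimizes $F$), Evans' $\varepsilon$-regularity theorem for weakly harmonic maps into spheres --- which needs only this Morrey-type smallness at all scales, not stationarity of $u$ and no estimate whatsoever on the approximating sequence --- yields smoothness of $u$ there. Your defect-measure definition of $\Sigma$ and the covering argument for $\mathcal{H}^1_{\loc}(\Sigma)<\infty$ are essentially the paper's (Schoen's argument) and are fine. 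To repair your proof, either establish the uniform $\varepsilon$-regularity in full, or replace it by this transfer-to-the-limit argument.
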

During the proof of Theorem 1.1, we show that the limit map $u$ is
partial regular  by a new approach, which is different from the
one in \cite {GMS1}, and does not use the concept of Cartesian
currents.

Giaquinta, Modica and Soucek  (\cite {GMS1},\cite {GMS2})  viewed
the relaxed energy functional in terms of Cartesian currents. Let $\g$ be a
smooth function defined in a neighborhood $\tilde\Omega$ of
$\bar\Omega$ with values in $S^2$. Set
$$\text{cart}_{\g}^{2,1}(\tilde \Omega\times
S^2):=\{T\in\text{cart}^{2,1}(\tilde \Omega\times S^2)\, |\,\,
(T-G_{\g})\llcorner (\tilde \Omega\backslash \bar\Omega)=0\}.
$$
It was shown in \cite {GMS1} that for any $u\in W^{1,2}_{\g}
(\tilde \Omega ,S^2)$, there is a $1$-dimensional integer rectifiable
current $L_u$  of minimal mass among the integer rectifiable
$1$-dimensional currents $L$ with $\text{spt} L_u\subset \bar
\Omega$ such that $-\partial L=P(u)$ and
$${\bf M} (L_u)=L(u), $$
where ${\bf M}(L_u)$ is the mass of the current $L_u$. Then
$$\cal D (T_u; \tilde \Omega )= \int_{\tilde \Omega }|\nabla
u|^2\,dx +8\pi {\bf M}(L_u)=F(u;\tilde \Omega ).
$$
For each $L=\tau (\cal L,\theta, \vec{L})$, we denote by $e(T)$
the energy density of the current $T:=G_u+L\times [[S^2]]$; i.e.,
$$e(T):=|\nabla u|^2\,dx + 8\pi \theta \cal H^1\llcorner \cal
L.
$$
 Then, in the spirit of \cite{GMS2} we improve Theorem 1.1 as:
\begin{thm}
    \label{thm:approx1}
 For a parameter $\varepsilon >0$, let
$u_{\varepsilon}$ be a minimizer of $E_{\varepsilon}$ in
$H_{\gamma}^{1,4}(\Omega ,S^2)$ as in Theorem 1.1. Then, there is
a sequence $\varepsilon_k\to 0$ such that
\[ G_{u_{\varepsilon_k}}\wc T_u=G_{u} +L_u\times [[S^2]] \] weakly in
$\text{cart}_{\g}^{2,1} (\tilde \Omega\times S^2)$ with $L_u=\tau
(\cal L,\theta, \vec{L} )$, $G_u$ denotes the  graph current of
$u$ in $\Omega$,
\[\cal
D(u_{\varepsilon_k} , \tilde \Omega ) \to D(T_u,\tilde \Omega
\times S^2)\] and
\[|\nabla u_{\varepsilon_k}|^2\,dx\wc |\nabla u|^2\,dx +8\pi \theta
\cal H^1\llcorner  \cal L \] in the sense of  Radon measures.
Moreover,  $u_{\varepsilon_k} \to u$ strongly in
$W^{1,2}(\Omega_0, S^2)$, where $\Omega_0$ is the open set in
Theorem 1.1.
\end{thm}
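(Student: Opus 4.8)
The plan is to upgrade the weak $W^{1,2}$ convergence of Theorem~\ref{thm:approx} to a convergence of graph currents together with a matching of energies, and then to read off the concentration measure and the strong convergence on $\Omega_0$. First, the minimality of $u_\varepsilon$ together with a comparison against a fixed $v\in C^\infty_\gamma(\bar\Omega,S^2)\subset W^{1,4}_\gamma(\Omega,S^2)$ gives $E_\varepsilon(u_\varepsilon;\Omega)\le E_\varepsilon(v;\Omega)$, so that, after extending every competitor by $\gamma$ on $\tilde\Omega\setminus\bar\Omega$, one has a uniform bound $\int_{\tilde\Omega}|\nabla u_{\varepsilon_k}|^2\,dx+\varepsilon_k^2\int_{\tilde\Omega}|\nabla u_{\varepsilon_k}|^4\,dx\le C$. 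Since each $u_{\varepsilon_k}$ is smooth, $G_{u_{\varepsilon_k}}$ is an integer multiplicity rectifiable current with $\partial G_{u_{\varepsilon_k}}\llcorner(\tilde\Omega\times S^2)=0$ and, because the $2\times2$ minors of $\nabla u_{\varepsilon_k}$ are dominated by $|\nabla u_{\varepsilon_k}|^2$, with mass controlled by $\int_{\tilde\Omega}(1+|\nabla u_{\varepsilon_k}|+|\nabla u_{\varepsilon_k}|^2)\,dx\le C$. By the closure theorem for Cartesian currents, a subsequence satisfies $G_{u_{\varepsilon_k}}\wc T$ with $T\in\text{cart}^{2,1}_\gamma(\tilde\Omega\times S^2)$, and the structure theorem yields $T=G_{\bar u}+L\times[[S^2]]$ with $\bar u\in W^{1,2}_\gamma$, $L$ integer rectifiable $1$-dimensional, $\text{spt}\,L\subset\bar\Omega$ and $-\partial L=P(\bar u)$. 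The absolutely continuous (graph) part of $T$ identifies $\bar u$ with the weak $W^{1,2}$ limit of $u_{\varepsilon_k}$, which by Theorem~\ref{thm:approx} is $u$; hence $T=G_u+L\times[[S^2]]$.

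Next I carry out the energy identity. Applying the Bethuel--Brezis--Coron lower semicontinuity for the relaxed energy to $G_{u_{\varepsilon_k}}\wc G_u+L\times[[S^2]]$ gives
\[\liminf_k\int_{\tilde\Omega}|\nabla u_{\varepsilon_k}|^2\,dx\ \ge\ \int_{\tilde\Omega}|\nabla u|^2\,dx+8\pi\,\mathbf M(L)\ \ge\ \int_{\tilde\Omega}|\nabla u|^2\,dx+8\pi\,\mathbf M(L_u)=F(u)=\min F,\]
where the second inequality is minimality of $L_u$ among connections with $-\partial L=P(u)$ and the last equality is Theorem~\ref{thm:approx}. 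On the other hand the comparison above and \eqref{1.2} give $\limsup_k E_{\varepsilon_k}(u_{\varepsilon_k};\tilde\Omega)\le\inf_{v\in C^\infty_\gamma}\int_{\tilde\Omega}|\nabla v|^2\,dx=\min F$. Sandwiching, every inequality is an equality: $\varepsilon_k^2\int_{\tilde\Omega}|\nabla u_{\varepsilon_k}|^4\,dx\to0$, $\mathbf M(L)=\mathbf M(L_u)$ so that $L$ is itself a minimal connection and $T=T_u$, and $\int_{\tilde\Omega}|\nabla u_{\varepsilon_k}|^2\,dx\to\int_{\tilde\Omega}|\nabla u|^2\,dx+8\pi\mathbf M(L_u)=\mathcal D(T_u,\tilde\Omega\times S^2)$, which is the asserted convergence of energies.

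For the concentration measure, the measures $|\nabla u_{\varepsilon_k}|^2\,dx$ are uniformly bounded, so a further subsequence converges weakly-$*$ to a Radon measure $\mu=|\nabla u|^2\,dx+\nu$ with $\nu\ge0$ (the lower bound on the density is the usual weak-$W^{1,2}$ lower semicontinuity localized to open sets). Writing $L_u=\tau(\mathcal L,\theta,\vec L)$, localizing the Bethuel--Brezis--Coron lower bound to small balls gives $\nu(U)\ge 8\pi\,\mathbf M(L_u\llcorner U)$ for all open $U$, i.e. $\nu\ge 8\pi\theta\,\mathcal H^1\llcorner\mathcal L$; since by the previous paragraph $\mathbf M(\nu)=8\pi\mathbf M(L_u)=\mathbf M(8\pi\theta\,\mathcal H^1\llcorner\mathcal L)$, equality holds: $\nu=8\pi\theta\,\mathcal H^1\llcorner\mathcal L$. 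Finally, $\Omega_0$ is, by its construction in the proof of Theorem~\ref{thm:approx}, the set of points at which the energies $|\nabla u_{\varepsilon_k}|^2$ do not concentrate; hence $\mathcal L\cap\Omega_0=\emptyset$ and $\mu\llcorner\Omega_0=|\nabla u|^2\,dx\llcorner\Omega_0$, and together with weak $W^{1,2}$ convergence this forces $\nabla u_{\varepsilon_k}\to\nabla u$ in $L^2_{\loc}(\Omega_0)$, i.e. strong convergence in $W^{1,2}(\Omega_0,S^2)$.

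\textbf{Main obstacle.} The delicate step is the localized lower bound $\nu\ge 8\pi\theta\,\mathcal H^1\llcorner\mathcal L$: one must show that the loss of Dirichlet energy along the concentration set accounts for at least $8\pi$ per unit $\theta$-weighted length, and in particular one must exclude the possibility that the quartic term $\varepsilon_k^2|\nabla u_{\varepsilon_k}|^4$ absorbs part of the concentrating energy (after which the total--mass computation upgrades the inequality to the equality $\nu=8\pi\theta\,\mathcal H^1\llcorner\mathcal L$). Closely tied to this is checking that the structural decomposition $T=G_u+L_u\times[[S^2]]$ of the limit current is compatible with the weak limit of the energy densities $|\nabla u_{\varepsilon_k}|^2\,dx$, which is precisely what bridges the current statement and the measure statement.
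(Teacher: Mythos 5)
Your overall architecture coincides with the paper's: pass to a weak limit $T$ of the graph currents in $\text{cart}^{2,1}_{\g}(\tilde\Omega\times S^2)$, identify its absolutely continuous part with the weak $W^{1,2}$ limit $u$, use the convergence of total Dirichlet energies supplied by Theorem \ref{thm:approx} to upgrade a localized lower-semicontinuity inequality between measures to an equality $\mu_0=e(T)$, and obtain strong convergence on $\Omega_0$ from the absence of the singular part there. Your density argument showing $\mathcal H^1(\mathcal L\cap\Omega_0)=0$ is also in substance what the paper does by citing \cite{GMS2}, p.~436.

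The one step that does not go through as written is precisely the one you flag as the main obstacle: the localized lower bound $\nu(U)\ge 8\pi\,\mathbf{M}(L_u\llcorner U)$ obtained ``by localizing the Bethuel--Brezis--Coron lower semicontinuity''. Localizing the relaxed energy to an open set $U$ produces the mass of a minimal connection for the singularities of $u$ \emph{relative to} $U$, in which dipoles are allowed to connect to $\partial U$; that mass is in general $\le\mathbf{M}(L_u\llcorner U)$, not $\ge$, so the inequality you need points the wrong way and the subsequent total-mass computation cannot pin down $\nu$. The paper closes this with a different tool: for $\psi\in C^0_c(\tilde\Omega)$, $\psi\ge0$, the functional $T\mapsto\int\psi\,e(T)$ is lower semicontinuous with respect to weak convergence in $\text{cart}^{2,1}$ (the parametric lower semicontinuity of \cite{GMS2}), applied to the \emph{actual limit current} $T$ rather than to an a priori chosen minimal connection of $u$. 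This yields $e(T)\le\mu_0$ directly as measures, i.e. $|\nabla u|^2\,dx+8\pi\theta\,\mathcal H^1\llcorner\mathcal L\le\mu_0$ with $\mathcal L$ the carrier of the singular part of $T$ itself, and the total-mass identity then forces equality. With that substitution your argument is complete; the remaining ingredients (mass bounds, closure and structure theorems, the sandwich identifying $L$ as a minimal connection so that $T=T_u$, and the strong $W^{1,2}$ convergence on $\Omega_0$) match the paper's proof.
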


Furthermore, Theorem 1.1 can be generalized to the case of maps
between two Riemannian manifolds. Let $\cal M$ be a  Riemannian
manifold with boundary $\partial \cal M$ and $\cal N$ another
compact Riemannian manifold without boundary. Let $\g$ be a smooth
map from $\partial\cal M$ to $\cal N$ which can be extended to a
smooth map $u_0$ from $\cal M$ to $\cal N$. For a map $u:\cal M\to
\cal N$, we consider the Dirichlet energy
 $E_{\cal M}(u)=\int_{\cal M} |\nabla u|^2\,d \cal M$. Then, we can define the relaxed
 energy  for  the Dirichlet energy $E_{\cal M}$ by
 $$\tilde E_{\cal M}(u):=\inf\left\{\liminf_{k\to \infty} E_{\cal M}(u_k)\,\left |\,
  \{u_k\}\subset  C^{\infty}_{\g}(\cal M,\cal N), u_k\wc u \text { weakly in }W^{1,2}(\cal M,\cal N)\right .\right \}
 $$
Under certain topological conditions on the manifold $\cal N$,
Giaquinta-Mucci \cite {GM} found a representation formula of
$\tilde E_{\cal M}(u)$. Without using the explicit formula $\tilde
E_{\cal M}(u)$ or any topological assumption on $\cal N$, we have

\begin{thm}
    \label{thm:manifold}
 For a parameter $\varepsilon >0$, let
$u_{\varepsilon}$ be a minimizer of $$E_{\varepsilon}(u; \cal M)
=\int_{\cal M} |\nabla u|^2 +\varepsilon^2 |\nabla u|^{n+1}\,d\cal
M$$ in $W_{\gamma}^{1,n+1}(\cal M, \cal N)$. As $\varepsilon_k\to
0$, minimizers $u_{\varepsilon_k}$ weakly converge  (possible
passing subsequence) to a minimizer $u$ of the relaxed energy
$\tilde E_{\cal M}$ in $W_{\gamma}^{1,2}(\cal M,\cal N)$.
Moreover, assuming that $\cal N$ is a homogenous manifold, $u$ is
harmonic and  smooth in an open subset $\cal M_0\subset   \cal M$
with $\cal H^{n-2}_{\loc}( \cal M\backslash  \cal M_0)<+\infty$.
\end{thm}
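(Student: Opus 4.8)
First I would prove that the minimizers converge to a minimizer of $\tilde E_{\cal M}$, following the scheme of Theorem~\ref{thm:approx} with the intrinsic definition of the relaxed energy in place of an explicit formula. Testing the minimality of $u_\eps$ against the given smooth extension $u_0$ gives
$$E_\eps(u_\eps;\cal M)\le E_{\cal M}(u_0)+\eps^2\int_{\cal M}|\D u_0|^{n+1}\,d\cal M,$$
so $\int_{\cal M}|\D u_\eps|^2\,d\cal M$ is bounded uniformly in $\eps$ and $\eps^2\int_{\cal M}|\D u_\eps|^{n+1}\,d\cal M$ stays bounded; hence along a subsequence $u_{\eps_k}\wc u$ weakly in $W^{1,2}$ with $u\in W^{1,2}_\g(\cal M,\cal N)$. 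Since every $u_{\eps_k}$ is smooth and equals $\g$ on $\partial\cal M$, the definition of $\tilde E_{\cal M}$ gives $\tilde E_{\cal M}(u)\le\liminf_k E_{\cal M}(u_{\eps_k})\le\liminf_k E_{\eps_k}(u_{\eps_k})$. Conversely, given any $v\in W^{1,2}_\g(\cal M,\cal N)$ with $\tilde E_{\cal M}(v)<\infty$, choose $v_j\in C^\infty_\g(\cal M,\cal N)$ with $v_j\wc v$ and $E_{\cal M}(v_j)\to\tilde E_{\cal M}(v)$; minimality of $u_{\eps_k}$ in $W^{1,n+1}_\g$ then gives $E_{\eps_k}(u_{\eps_k})\le E_{\cal M}(v_j)+\eps_k^2\int_{\cal M}|\D v_j|^{n+1}\,d\cal M$, and letting first $k\to\infty$ and then $j\to\infty$ yields $\limsup_k E_{\eps_k}(u_{\eps_k})\le\tilde E_{\cal M}(v)$. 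Combining the two bounds, $\tilde E_{\cal M}(u)\le\tilde E_{\cal M}(v)$ for all admissible $v$, so $u$ minimizes $\tilde E_{\cal M}$, and moreover $E_{\eps_k}(u_{\eps_k})\to\tilde E_{\cal M}(u)$, which forces $\eps_k^2\int_{\cal M}|\D u_{\eps_k}|^{n+1}\,d\cal M\to0$ and $\int_{\cal M}|\D u_{\eps_k}|^2\,d\cal M\to\tilde E_{\cal M}(u)$.

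The regularity is the heart of the matter. Each $u_\eps$ is a smooth minimizer of $E_\eps$, hence satisfies the Euler--Lagrange system $-\laplacian u_\eps=A(u_\eps)(\D u_\eps,\D u_\eps)+\eps^2(\cdots)$, with $A$ the second fundamental form of $\cal N$ and the last term in divergence form of order $\eps^2$; it also satisfies the stationarity identity for inner variations, which yields (as for Theorem~\ref{thm:approx}) an almost-monotonicity formula for $\rho\mapsto\rho^{2-n}\int_{B_\rho(x)}|\D u_\eps|^2\,d\cal M$, the corrections being governed by the $\eps$-energy on $B_\rho(x)$ and by the curvature of $\cal M$, hence uniformly bounded and vanishing along the chosen sequence. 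The decisive ingredient is a uniform $\eps$-regularity lemma: there is $\delta_0>0$, independent of $\eps$, such that $\rho^{2-n}\int_{B_\rho(x)}|\D u_\eps|^2<\delta_0$ forces a uniform $C^{1,\a}(B_{\rho/2}(x))$ bound on $u_\eps$, and hence precompactness of $\{u_{\eps_k}\}$ in $W^{1,2}(B_{\rho/2}(x))$; one proves it by a blow-up and comparison argument using the minimality of $u_\eps$ for $E_\eps$, treating separately the scales on which the Dirichlet energy dominates the term $\eps^2|\D u|^{n+1}$, where a Schoen--Uhlenbeck-type argument applies, and the (necessarily small) scales on which the latter dominates, where it is supercritical and Morrey's estimate gives Hölder continuity of $u_\eps$ directly. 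It is here that the homogeneity of $\cal N$ enters: as in the regularity theory for stationary harmonic maps into homogeneous targets together with Lin's blow-up analysis of the defect measure, the divergence-form conservation laws generated by the Killing fields of $\cal N$ (Noether's theorem, the $\eps^2$-perturbation still in divergence form) control the concentration of energy and rule out collapse, so that the defect measure $\mu$ in $|\D u_{\eps_k}|^2\,d\cal M\wc|\D u|^2\,d\cal M+\mu$ is supported on an $(n-2)$-rectifiable set. I expect the main obstacle to be precisely this step --- making the monotonicity formula and the $\eps$-regularity threshold uniform in $\eps$ while carrying the singular perturbation, and drawing the codimension-two control of the concentration set from the homogeneity of $\cal N$. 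Boundary regularity near $\partial\cal M$, where all $u_\eps$ carry the fixed smooth data $\g$, follows from the boundary version of the same estimate together with the smoothness of $u_0$.

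Finally I would assemble the conclusion. Put
$$\cal M_0=\Big\{x\in\cal M:\ \liminf_{\rho\to0}\ \liminf_{k\to\infty}\ \rho^{2-n}\!\int_{B_\rho(x)}|\D u_{\eps_k}|^2\,d\cal M<\delta_0\Big\}.$$
By the $\eps$-regularity lemma $\cal M_0$ is open; on compact subsets of $\cal M_0$ the $u_{\eps_k}$ are bounded in $C^{1,\a}$ and, after passing to a further subsequence, converge strongly in $W^{1,2}_{\loc}(\cal M_0)$ to $u$, whence elliptic bootstrapping gives $u\in C^\infty(\cal M_0,\cal N)$ and $u$ solves the harmonic map equation on $\cal M_0$. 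A Vitali covering argument together with the uniform bound on $\int_{\cal M}|\D u_{\eps_k}|^2\,d\cal M$ and the almost-monotonicity shows $\cal H^{n-2}_{\loc}(\cal M\backslash\cal M_0)<+\infty$. Since a set of locally finite $(n-2)$-dimensional Hausdorff measure has zero $W^{1,2}$-capacity, one picks cutoffs $\eta_\delta\to1$ in $W^{1,2}$ that vanish near $\cal M\backslash\cal M_0$, tests the equation for $u_{\eps_k}$ against $\eta_\delta\varphi$ with $\varphi$ smooth and compactly supported, lets $k\to\infty$ using the strong convergence on $\{\eta_\delta\neq0\}$ and $\eps_k^2\int_{\cal M}|\D u_{\eps_k}|^{n+1}\,d\cal M\to0$ to kill the perturbation terms, and then lets $\delta\to0$. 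This shows that $u$ is a weakly harmonic map on all of $\cal M$, smooth on the open set $\cal M_0$ with $\cal H^{n-2}_{\loc}(\cal M\backslash\cal M_0)<+\infty$, which is the assertion of the theorem.
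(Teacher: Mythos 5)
Your first half --- extracting a weak limit, sandwiching $\tilde E_{\cal M}(u)$ between $\liminf_k E_{\varepsilon_k}(u_{\varepsilon_k})$ and $\inf\tilde E_{\cal M}$, and deducing both minimality and the vanishing of $\varepsilon_k^2\int_{\cal M}|\nabla u_{\varepsilon_k}|^{n+1}\,d\cal M$ --- is essentially the paper's argument. One small repair: you invoke the smoothness of $u_{\varepsilon_k}$ to make it an admissible competitor in the definition of $\tilde E_{\cal M}(u)$; the paper instead first replaces $C^\infty_\g$ by $W^{1,n+1}_\g$ in that definition (legitimate because $n+1>\dim\cal M$, so smooth maps are dense in $W^{1,n+1}_\g(\cal M,\cal N)$), which avoids having to prove any regularity of the $u_{\varepsilon_k}$ at this stage.

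The regularity half has a genuine gap, and it sits exactly where you say you expect trouble: the uniform-in-$\varepsilon$ $\varepsilon$-regularity lemma giving $C^{1,\a}$ bounds on $u_\varepsilon$ from smallness of $\rho^{2-n}\int_{B_\rho}|\nabla u_\varepsilon|^2$. You offer only a two-regime sketch for this singular perturbation, yet everything downstream in your plan (strong $W^{1,2}_{\loc}$ convergence on $\cal M_0$, passing to the limit in the full Euler--Lagrange system, the capacity removal of the singular set) rests on it. The paper neither proves nor needs any uniform estimate on the $u_\varepsilon$. It does two things instead. First, it shows the limit $u$ is weakly harmonic on all of $\cal M$ using only weak convergence: since $\cal N$ is homogeneous, H\'elein's Killing fields $X_i$ yield the conservation laws $\int_{\cal M}\nabla_k\varphi\,\langle X_i(u_\varepsilon),\nabla_k u_\varepsilon+2\varepsilon^2|\nabla u_\varepsilon|^{n-1}\nabla_k u_\varepsilon\rangle\,d\mu=0$, which are linear in $\nabla u_\varepsilon$ up to the $\varepsilon^2$ term (killed by the vanishing of $\varepsilon_k^2\int|\nabla u_{\varepsilon_k}|^{n+1}$ and H\"older); they pass to the limit under weak $L^2$ convergence of $\nabla u_{\varepsilon_k}$ and strong $L^2$ convergence of $X_i(u_{\varepsilon_k})$, and $\sum_i\varphi_iX_i(u)$ exhausts all test fields. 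This --- not defect-measure rectifiability --- is the actual role of homogeneity: without it one cannot even identify the weak limit as a harmonic map, because the quadratic term $A(u_\varepsilon)(\nabla u_\varepsilon,\nabla u_\varepsilon)$ does not pass to a weak limit. Second, the inner-variation identity of Lemma 2.1 (an exact identity, requiring no a priori bounds) is passed to the limit to give $\rho^{2-n}\int_{B_\rho(y)}|\nabla u|^2\,d\cal M<2\varepsilon_0$ for the limit $u$ at all points and scales near any $x_0\notin\Sigma$, and Bethuel's $\varepsilon$-regularity theorem is then applied to the weakly harmonic map $u$ itself, never to the $u_{\varepsilon_k}$. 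To make your route work you would have to actually prove the uniform $C^{1,\a}$ estimate, a substantial result the paper deliberately circumvents; otherwise you should reorganize so that, as above, every limit is taken with only weak convergence in hand.
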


After we finished an early version of our paper, Fanghua Lin
informed us that a result related to Theorem \ref{thm:manifold}
was studied by him in \cite {L}, but his proof is different from
ours.

\medskip
 In the second part of this paper, we apply the same
approximation method to the study of
 the relaxed energy for the Faddeev model. The Faddeev theory has created a lot of interest in physics as a
successful effective field theory (\cite {F2}).  The underlying
idea is very old for, in the 19th century, Lord Kelvin proposed
that atoms could be described as knotted vortex tubes in the
ether. Modern physics replaces the ether by `fields' of which the
classical gravitational and electromagnetic fields are the  most
studied. It is known from work in condensed matter theory that
field theories (such as sigma models) are useful in understanding
the behaviour of composite particles. Thus  Faddeev  \cite{model}
in 1979 proposed that closed, knotted vortices could be
constructed in a definite dynamical system; namely the Faddeev
model. The study of the dynamics of knots as the solution
configurations of suitable Lagrangian field-theory equations has
only  been brought to light by Faddeev and Niemi in \cite {FN}.
They employed powerful numerical algorithms to show that a
ring-shaped charge one soliton exists. In mathematics,  the
Faddeev energy  for a map $u$  from $\Real^3$ to $S^2$ is given by
\begin{equation*}
    E_F(u)=\int_{\Real^3}[\abs{\nabla u}^2+\frac 1 2 \sum_{1\leq k<l\leq 3}\abs{\partial _k u\times \partial_l u}^2 ]dx.
\end{equation*}
For any interesting configuration in physics, it is required that
$u$ converges to a constant sufficiently fast near the infinity.
Under this assumption, we may compactify $\Real^3$ by adding a
point representing the infinity and view  $u$  to be a map from
$S^3$ to $S^2$. Hence any smooth field configuration $u$ can be
characterized by the topological charge $Q(u)$ given by the Hopf
degree of $u$ from $\R^3$ to $S^2$. It is well known that the Hopf
degree can be expressed analytically as
\begin{equation*}
    Q(u)=\frac{1}{16\pi^2}\int_{\Real^3}\eta\wedge
    u^*(\omega_{S^2}),
\end{equation*}
where $\omega_{S^2}$ is a volume form on $S^2$ and
$d\eta=u^*\omega_{S^2}$. Such an $\eta$ exists because
$u^*\omega_{S^2}$ is closed. Moreover, one can see  that $Q(u)$ is
independent of the choice of $\eta$.
In this paper, unless stated otherwise, we will always take
\begin{equation}\label{def:eta}
    \eta=\delta(-\frac{1}{4\pi\abs{x}}\star u^*\omega_{S^2}).
\end{equation}
Here $\star$ means convolution and $\delta$ is the formal adjoint of $d$.

Vakulenko and Kapitanski in \cite{VK} found a
lower-bound for the Faddeev energy; i.e. there is a constant $C>0$
such that
\begin{equation*}
    E_F(u)\geq C Q(u)^{3/4}.
\end{equation*}

A minimizer of $E_F$ among all maps with the same Hopf degree is
called a hopfion, or a Hopf soliton. For each $d\in \mathbb Z$,
Faddeev \cite {F2} suggested that there is a knotted  minimizer of
$E_F$ in $H_d$, where $H_d$ is the class of smooth maps
$u:\Real^3\to S^2$ which have bounded energy and approach a
constant value sufficiently fast as required by physics  at the
infinity (for a more precise definition see Section 3). Many
contributions have been made by physicists (\cite {RV}). A natural
functional space for the minimization of Faddeev energy is
\begin{equation*}
    X=\{u:\Real^3\to S^2\,|  \quad E_F(u)< +\infty\}.
\end{equation*}
Lin and Yang (\cite{LY1}, \cite{LY2}) showed that for infinitely many integers $d$'s, including
the case $d=\pm 1$, there is a minimizer of $E_F$ in $X_d$, where
\begin{equation*}
    X_d=\{u\in X\,| \quad Q(u)=d\}.
\end{equation*}
 (See \cite{Hang} for
 further development). However, it is still unknown whether the minimizers obtained by Lin-Yang \cite {LY1}
 are
smooth or not. Therefore, there might occur a gap phenomenon for
the Faddeev energy similarly to one for harmonic maps; i.e.
\begin{equation}\label{1.4}
    \inf_{u\in X_d} E_F(u)<\inf_{u\in H_d} E_F(u).
\end{equation}
 A similar situation occurs in the Skyrme model as pointed out by Esteban and M\"uller \cite{EM}.

It is a very challenging problem whether the infimum of  Faddeev
energy on the righthand of (\ref{1.4}) can be achieved or not.
We consider the relaxed energy for
$u\in X_d$,
\begin{equation*}
    \tilde{E}_F(u)=\inf\left \{
    \liminf_{k\to \infty} E_F(u_k)\,| \quad \{u_k\} \subset H_d, u_k\rightharpoonup u \mbox{ weakly
    }
    \right\},
\end{equation*}
where `weakly converging' means in the  sense of bounded Faddeev
    energy. For a map $u\in X_d$ without any sequence
$u_k$ converging weakly to $u$,  we take the value
$\tilde{E}_F(u)$ to be $+\infty$.

However, due to the complexity of the Faddeev energy,  this time
we do not have an explicit formula as, for instance, in \cite{BBC}
and \cite {GMS2}. By using the same approximation method as
previously, we will find a minimizer for the relaxed energy
$\tilde{E}_F$ in some cases. More precisely, we have
\begin{thm}
    For $d=\pm 1$, there is a minimizer of the relaxed energy $\tilde{E}_F$ in
    $X_d$. Moreover,
     \begin{equation}\inf_{u\in H_{\pm 1}} E_F(u) =\min_{u\in X_{\pm
     1}}\tilde{E}_F(u).\end{equation}
\end{thm}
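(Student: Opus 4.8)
The plan is to mimic the approximation strategy used for the relaxed Dirichlet energy, now applied to the Faddeev functional. For $\eps > 0$ consider the perturbed functional
$$
E_{F,\eps}(u) = E_F(u) + \eps^2 \int_{\R^3} |\nabla u|^4\, dx
$$
on the class $H_d$ (or its natural $W^{1,4}$-closure with the Hopf-degree constraint), and let $u_\eps$ be a minimizer; existence follows as in Lin--Yang's work since the added term only improves coercivity, and one expects $u_\eps$ to be smooth because the higher-order term regularizes the Euler--Lagrange system. First I would record the uniform bound $E_{F,\eps}(u_\eps) \le E_F(v)$ for any fixed smooth competitor $v \in H_{\pm 1}$, which gives $\sup_\eps E_F(u_\eps) < \infty$ and $\eps^2 \int |\nabla u_\eps|^4 \to 0$. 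Then, after passing to a subsequence, $u_{\eps_k} \rightharpoonup u$ weakly in the Faddeev sense for some $u \in X_d$; the definition of $\tilde E_F$ immediately gives $\tilde E_F(u) \le \liminf_k E_F(u_{\eps_k}) \le \inf_{v\in H_{\pm 1}} E_F(v)$.

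The reverse inequality $\inf_{H_{\pm 1}} E_F \le \min_{X_{\pm 1}} \tilde E_F$ is the genuinely structural point. For any $w \in X_d$ with $\tilde E_F(w) < \infty$ there is, by definition, a sequence $w_j \in H_d$ with $w_j \rightharpoonup w$ and $E_F(w_j) \to \tilde E_F(w)$; since each $w_j \in H_d \subset X_d$ is an admissible competitor for the $\eps$-problems after truncation, and since $\inf_{H_d} E_F \le E_F(w_j) \to \tilde E_F(w)$, we get $\inf_{H_d} E_F \le \tilde E_F(w)$ for all such $w$, hence $\inf_{H_d} E_F \le \inf_{X_d} \tilde E_F$. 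Combining with the previous paragraph (evaluated at the minimizing $u$, which exists once we know the infimum of $\tilde E_F$ over $X_{\pm 1}$ is attained), we obtain the chain
$$
\inf_{u\in H_{\pm 1}} E_F(u) \;\le\; \inf_{w\in X_{\pm 1}} \tilde E_F(w) \;\le\; \tilde E_F(u) \;\le\; \inf_{v\in H_{\pm 1}} E_F(v),
$$
so all quantities coincide and $u$ is the desired minimizer of $\tilde E_F$ in $X_{\pm 1}$.

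The hard part will be the two analytic ingredients hidden in this outline. The first is showing that the weak limit $u$ actually has Hopf degree $d = \pm 1$: the Hopf invariant is \emph{not} continuous under mere weak convergence of bounded-energy maps, and one must control how degree can be lost. This is where the restriction to $d=\pm 1$ enters — following Lin--Yang and Vakulenko--Kapitanski, if degree were to drop from $\pm 1$ it would have to drop to $0$, which would force an energy gap via the lower bound $E_F(u) \ge C\,Q(u)^{3/4}$ together with a concentration-compactness argument showing that the lost energy is at least that of a degree $\pm 1$ hopfion; a subadditivity/strict-inequality argument of the Lin--Yang type then rules this out and pins $Q(u)=\pm 1$. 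The second ingredient is that the $\eps$-minimizers $u_\eps$ are themselves admissible for comparison, i.e.\ that they can be modified to lie in $H_d$ with arbitrarily small energy loss, so that $\inf_{H_d} E_F \le \liminf E_F(u_\eps)$ — this requires a decay and smoothing argument near infinity. Once these are in place, the rest is the soft variational bookkeeping displayed above.
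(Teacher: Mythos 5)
Your proposal follows essentially the same route as the paper: perturb by $\varepsilon\int_{\R^3}|\nabla u|^4\,dx$, minimize in the degree class, let $\varepsilon\to 0$, and derive both inequalities from the definition of $\tilde E_F$ plus the admissibility of the perturbed minimizers; the two ``hard ingredients'' you flag are exactly the paper's Theorem \ref{thm:keylemma} (density of maps constant near infinity in the class with finite $\int|\nabla u|^4$) and its Sections 4--5 (Hopf lift, cubic decomposition lemma, and the Lin--Yang splitting argument pitting the Vakulenko--Kapitanski lower bound against Ward's $32\sqrt{2}\pi^2$ upper bound). One small correction: existence of the minimizer of $E_{F,\varepsilon}$ for fixed $\varepsilon$ at $d=\pm1$ is not a coercivity statement but already requires that same splitting/numerical comparison (the paper's Theorem \ref{thm:one}), so the concentration-compactness step enters twice, not once.
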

Our proofs are based on the ideas of
Lin-Yang in \cite {LY1}. We will introduce a perturbed energy ${E}_{F,\varepsilon}$ with a
parameter $\varepsilon$.  It turns out that the analysis involved
in the perturbed variational problem  for $\varepsilon>0$ is much
easier to understand. Modifying an idea of  Ward in \cite{Ward}
and Lin-Yang in \cite{LY2}, we show that the new minimizing problem
has a solution for $d=\pm 1$.
 Then, letting $\varepsilon$ go to zero,  we prove that the minimizers of the perturbed
 energy ${E}_{F,\varepsilon}$ in $X_{\pm 1}$
 will converge to a minimizer of the relax energy $\tilde{E}_F$ in $X_{\pm 1}$.

\section{Relaxed energy for harmonic maps}

\begin{lem}
Let $u_{\varepsilon}$ be a minimizer of the functional
$E_{\varepsilon}$. Then, for all $0<\rho \leq R$ with
$B_R(x_0)\subset \Omega$, we have
  \begin{align*}
&\quad R^{-1}\int_{B_R(x_0)} \left [ |\nabla u_{\varepsilon} |^2
+\varepsilon^2 |\nabla u_{\varepsilon}|^{4}\right
]\,dx-\rho^{-1}\int_{B_{ \rho}(x_0)} \left [
|\nabla u_{\varepsilon} |^2 +\varepsilon^2 |\nabla u_{\varepsilon}|^{4}\right ]\,dx\\
&=\int_{B_R\backslash B_{\rho}(x_0)}\left [ 1+2\varepsilon^2
|\nabla u_{\varepsilon}|^{2}\right ] |\partial_r
u_{\varepsilon}|^2 r^{-1} \,dx-\int_{\rho}^R \int_{B_r(x_0)}
2\varepsilon^2 |\nabla u_{\varepsilon}|^{4}(y)\,dy r^{-2}\,dr.
 \end{align*}
 \end{lem}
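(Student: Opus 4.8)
The plan is to deduce the identity from the stationarity of the minimiser $u_\varepsilon$ with respect to inner (domain) variations, i.e.\ from a Pohozaev-type identity for $E_\varepsilon$, specialised to radial deformations centred at $x_0$. Since each $u_\varepsilon$ is smooth (as recalled in the Introduction), every step below is classical and no approximation of $u_\varepsilon$ is needed. Write $F(p)=|p|^2+\varepsilon^2|p|^4$, so that $E_\varepsilon(u;\Omega)=\int_\Omega F(\nabla u)\,dx$ and $\partial F/\partial p^\alpha_i=(2+4\varepsilon^2|p|^2)p^\alpha_i$, and take $x_0=0$, $r=|x|$. For $X\in C^\infty_c(\Omega;\R^3)$ the competitors $u_t(x):=u_\varepsilon(x+tX(x))$ lie in $W^{1,4}_\gamma(\Omega;S^2)$ for $|t|$ small (they equal $\gamma$ near $\partial\Omega$ and still take values in $S^2$), so minimality forces $\frac{d}{dt}\big|_{t=0}E_\varepsilon(u_t;\Omega)=0$. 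Carrying out the change of variables $y=x+tX(x)$ (the Jacobian produces a $-F(\nabla u_\varepsilon)\,\mathrm{div}\,X$ term, the chain rule a $\sum_{\alpha,i,j}\frac{\partial F}{\partial p^\alpha_i}(\nabla u_\varepsilon)\,\partial_j u_\varepsilon^\alpha\,\partial_i X^j$ term) and differentiating at $t=0$ yields the stress--energy identity
\[
\int_\Omega\Big[\big(2+4\varepsilon^2|\nabla u_\varepsilon|^2\big)\,\partial_i u_\varepsilon\cdot\partial_j u_\varepsilon-F(\nabla u_\varepsilon)\,\delta_{ij}\Big]\partial_i X^j\,dx=0 ,
\]
summation over $i,j$ understood.

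I would then specialise to $X(x)=\zeta(r)\,x$ with $\zeta\in C^\infty_c([0,R))$. Since $\partial_i X^j=\zeta'(r)\frac{x_i x_j}{r}+\zeta(r)\delta_{ij}$, and using the elementary identities $\sum_i x_i\,\partial_i u_\varepsilon=r\,\partial_r u_\varepsilon$ and $\sum_i\partial_i u_\varepsilon\cdot\partial_i u_\varepsilon=|\nabla u_\varepsilon|^2$ together with $n=3$, the identity above collapses to
\[
\int_\Omega\Big[\zeta'(r)\,r\big\{(2+4\varepsilon^2|\nabla u_\varepsilon|^2)|\partial_r u_\varepsilon|^2-F\big\}+\zeta(r)\big\{(2+4\varepsilon^2|\nabla u_\varepsilon|^2)|\nabla u_\varepsilon|^2-3F\big\}\Big]dx=0 .
\]
The decisive algebraic point is the cancellation
\[
(2+4\varepsilon^2|\nabla u_\varepsilon|^2)\,|\nabla u_\varepsilon|^2-3F=-F+2\varepsilon^2|\nabla u_\varepsilon|^4 ,
\]
which is exactly what will later produce the bulk term $\int_\rho^R\!\int_{B_r}2\varepsilon^2|\nabla u_\varepsilon|^4\,dy\,r^{-2}\,dr$.

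Next I would let $\zeta$ approach the indicator $\mathbf 1_{[0,s]}$ for $0<s\le R$ with $B_s\subset\Omega$ --- concretely, test with $\zeta_\delta$ equal to $1$ on $[0,s-\delta]$, affine on $[s-\delta,s]$, and $0$ beyond, and send $\delta\to0$, which is legitimate since all integrands are continuous ($u_\varepsilon$ being smooth). This gives, for every such $s$,
\[
s\int_{\partial B_s}\big[(2+4\varepsilon^2|\nabla u_\varepsilon|^2)|\partial_r u_\varepsilon|^2-F\big]\,d\sigma=\int_{B_s}\big[{-F}+2\varepsilon^2|\nabla u_\varepsilon|^4\big]\,dx .
\]
Writing $D(s):=\int_{B_s}F\,dx=\int_{B_s}(|\nabla u_\varepsilon|^2+\varepsilon^2|\nabla u_\varepsilon|^4)\,dx$, so that $D'(s)=\int_{\partial B_s}F\,d\sigma$ by the coarea formula, and dividing by $s^2$ after moving $sD'(s)$ to the left, the last line is equivalent to the pointwise identity
\[
\frac{d}{ds}\Big(\frac{D(s)}{s}\Big)=\frac1s\int_{\partial B_s}\big(2+4\varepsilon^2|\nabla u_\varepsilon|^2\big)|\partial_r u_\varepsilon|^2\,d\sigma-\frac{2\varepsilon^2}{s^2}\int_{B_s}|\nabla u_\varepsilon|^4\,dx .
\]
Integrating over $s\in[\rho,R]$, converting the boundary term by the coarea formula via $\int_\rho^R s^{-1}\big(\int_{\partial B_s}h\,d\sigma\big)ds=\int_{B_R\setminus B_\rho}r^{-1}h\,dx$, and noting $2+4\varepsilon^2|\nabla u_\varepsilon|^2=2(1+2\varepsilon^2|\nabla u_\varepsilon|^2)$, one arrives at the identity in the statement.

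I do not expect a genuine obstacle here: the Lemma is simply the Pohozaev identity for $E_\varepsilon$ in integrated form, and the smoothness of the minimisers dispels all regularity concerns. The points that call for care are the algebra of the second step --- in particular the cancellation $(2+4\varepsilon^2|\nabla u_\varepsilon|^2)|\nabla u_\varepsilon|^2-3F=-F+2\varepsilon^2|\nabla u_\varepsilon|^4$, which is what separates the monotone boundary contribution from the $\varepsilon^2$ correction --- and the routine limit $\zeta_\delta\to\mathbf 1_{[0,s]}$, harmless precisely because $u_\varepsilon$ is continuous.
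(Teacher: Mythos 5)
Your argument is correct and is essentially the paper's own proof: both derive the inner-variation (stress--energy/Pohozaev) identity from $\frac{d}{dt}E_\varepsilon(u_\varepsilon(x+tX))|_{t=0}=0$, test it with the radial field $\zeta(|x|)x$, pass to the indicator cutoff to get the identity on each sphere, and then integrate $\frac{d}{ds}(D(s)/s)$ from $\rho$ to $R$. One small remark: your (correct) computation, like the paper's own intermediate identity, produces the boundary term $\int_{B_R\setminus B_\rho}2\bigl(1+2\varepsilon^2|\nabla u_\varepsilon|^2\bigr)|\partial_r u_\varepsilon|^2 r^{-1}\,dx$, i.e.\ the lemma as stated is missing a factor of $2$ in that term, which is immaterial since only its nonnegativity is used later.
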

\begin{proof} Without loss of generality, we assume $x_0=0$.
Let $\phi (x)=(\phi^1 (x), ... , \phi^n (x))\in
C^1(\Omega ;\R^n)$ be a vector field having compact support in
$\Omega$. For a function $\phi_t (x)=x+t\phi (x)$ with
$x=(x^1,...,x^n)$ and for a function $u(x)$, set
$u^{t,\phi}(x):=u_{\varepsilon}(x+t\phi (x))$.

We see $\nabla_{x^i} u^{t,\phi}(x)=u_{x^k}(x+t\phi
)[\delta_{ik}+t\phi^k_{x^i}]$. Since $u_{\varepsilon}$ is a
minimizer of $E_{\varepsilon}$, it follows from $\frac d {dt}
E_{\varepsilon} (u^{t,\phi}, \Omega ) |_{t=0}=0$ that
  \begin{align*}
\int_{\Omega } &\left (|\nabla u_{\varepsilon}|^2 +\varepsilon^2
|\nabla u_{\varepsilon}|^{4} \right )\text {div} \phi -2
{u_{\varepsilon}}_{x^i}{u_{\varepsilon}}_{x^k}\phi^k_{x^i}
(1+2\varepsilon^2 |\nabla u_{\varepsilon}|^2 )\,dx=0.
 \end{align*}
For a given  ball $B_r(0)\subset \Omega$,  we choose $\phi (x)
=\xi (|x|)x$ with

\begin{displaymath}
\xi (s)=\left\{\begin{array}{ll}1&\textrm{ for $s\leq r$}\\
1+\frac {r-s}h &\textrm{ for $r\leq s\leq r+h$}\\
0 &     \textrm{ for $s\geq r+h$}.
\end{array}\right.
\end{displaymath}

Letting $h\to 0$, we obtain that for almost every $r$
 \begin{align*}
&\int_{B_r} [|\nabla u_{\varepsilon}|^2 +\varepsilon^2 |\nabla u_{\varepsilon}|^{4}]\,dx-r\int_{\partial B_r} [|\nabla u_{\varepsilon}|^2 +\varepsilon^2
|\nabla u_{\varepsilon}|^{4}]\,d H^{n-1}\\
&=-\frac 2 {r}\int_{\partial B_r} (1+2\varepsilon^2 |\nabla
u_{\varepsilon}|^{2}) |x^i\partial_{x^i}
u_{\varepsilon}|^{2}\,dH^{n-1} +\int_{B_r} 2\varepsilon^2 |\nabla
u_{\varepsilon}|^{4}\,dx .\end{align*}

 Multiplying by $r^{-2}$ both sides
of the above identity and integrating with respect to $r$ from
$\rho$ to $R$ yields the result. \end{proof}

We now complete the proof of Theorem \ref{thm:approx} and Theorem \ref{thm:approx1}.

\begin{proof}[Proof of Theorem \ref{thm:approx}]

 Let $u_{\varepsilon}$ be a minimizer of the functional
$E_{\varepsilon}$ in $W_{\g}^{1,4}(\Omega ;S^2)$ and $u$ the weak
limit of the sequence in $W^{1,2}$. Since $F$ is lower
semi-continuous, we have
\begin{align}
& F(u; \Omega) \leq  \liminf_{\varepsilon_k\to 0} \int_{\Omega}
|\nabla u_{\varepsilon_k}|^2\,dx \leq \liminf_{\varepsilon_k\to 0}
\int_{\Omega} |\nabla u_{\varepsilon_k}|^2+\varepsilon_k^2 |\nabla
u_{\varepsilon_k}|^4
 \,dx\\
 & \leq  \inf_{v\in C_{\g}^{\infty}(\Omega ;S^2)}  \liminf_{\varepsilon_k\to 0} \int_{\Omega} |\nabla v|^2+\varepsilon_k^2 |\nabla v|^4
 \,dx = \inf_{v\in C_{\g}^{\infty}(\Omega ;S^2)} \int_{\Omega} |\nabla v|^2\,dx.
\end{align}
Using (1.2), we note
\[ \inf_{v\in C_{\g}^{\infty}(\Omega ;S^2)} \int_{\Omega} |\nabla v|^2\,dx =\min_{v\in W^{1,2}_{\g}(\Omega ;S^2)}
F(v).\] Then it follows from (2.1) and (2.2) that $u$ is a
minimizer of the relaxed energy $F$ in $W^{1,2}(\Omega ;S^2)$.
Moreover, we have
  \begin{equation}
  \lim_{\varepsilon_k\to 0} \int_{\Omega}\varepsilon_k^2 |\nabla u_{\varepsilon_k}|^4
 \,dx =0. \end{equation}

 We define
 $$\Sigma =\bigcap_{R>0} \left\{x_0\in \Omega : B_R(x_0)\subset \Omega,\quad \liminf_{\varepsilon_k\to 0}
  \frac 1 R\int_{B_R(x_0)} |\nabla u_{\varepsilon} |^2 \,dx\geq \varepsilon_0\right
 \}
 $$
 for a sufficiently small constant $\varepsilon_0$ to be fixed later.

As in \cite{Sc}, we can show that $\Sigma$ is
relatively closed inside $\Omega$. In fact, let $x_j$ be a
sequence in $\Sigma$ and $x_j$ goes to $x\in \Omega$. To see that
$x\in \Sigma$, we need to show for all $R>0$,
\begin{equation*}
    \liminf_{\varepsilon\to 0} \frac{1}{R}\int_{B_R(x)} \abs{\nabla u_\varepsilon}^2 dx\geq \varepsilon_0.
\end{equation*}
Let $r<R$ (but very close). Since $x_j\in \Sigma$,
\begin{equation*}
    \liminf_{\varepsilon\to 0}\frac{1}{r}\int_{B_r(x_j)}\abs{\nabla u_\varepsilon}^2 dx\geq \varepsilon_0.
\end{equation*}
Choose some large $j$ so that $B_r(x_j)\subset B_R(x)$, then
\begin{equation*}
    \frac{1}{r}\int_{B_r(x_j)}\abs{\nabla u_\varepsilon}^2 dx\leq \frac{R}{r} \frac{1}{R}\int_{B_R(x)}\abs{\nabla u_\varepsilon}^2 dx.
\end{equation*}
Hence,
\begin{equation*}
    \frac{R}{r}\liminf_{\varepsilon_k\to 0}\frac{1}{R}\int_{B_R(x)} \abs{\nabla u_\varepsilon}^2dx\geq \varepsilon_0.
\end{equation*}
Since $r$ can be chosen very close to $R$,
\begin{equation*}
    \liminf_{\varepsilon_k\to 0}\frac{1}{R}\int_{B_R(x)} \abs{\nabla u_\varepsilon}^2dx\geq \varepsilon_0.
\end{equation*}
This is true for all $R>0$. Therefore, $x\in \Sigma$ and $\Sigma$
is closed. The same argument of Schoen \cite {Sc} yields
 $\cal H_{\loc}^1(\Sigma )<+\infty$.

 A consequence is that for each $x_0\notin \Sigma$, there are a sequence $\varepsilon_k\to 0$ and   a ball
 $B_{R_0}(x_0)\subset\Omega\backslash\Sigma$ such that
\[\lim_{\varepsilon_k\to 0} R_0^{-1}\int_{B_{R_0}(x_0)} |\nabla
u_{\varepsilon_k} |^2 \,dx<\varepsilon_0.\]

Now, we prove that $u$ is smooth around such points $x_0$. In fact, for each
$y\in B_{R_0/2}(x_0)$, trivially
\begin{equation*}
    \lim_{\varepsilon_k\to 0}\frac{2}{R_0}\int_{B_{R_0/2}(y)}\abs{\nabla u_{\varepsilon_k}}^2 dx
    \leq \lim_{\varepsilon_k\to 0}2
    \frac{1}{R}\int_{B_{R_0}(x_0)}\abs{\nabla u_{\varepsilon_k}}^2 dx< 2\varepsilon_0,
\end{equation*}
while, applying (2.3)  to Lemma 2.1,  for any $0<\rho <R_0/2$, we have
\begin{align*}
&    \lim_{\varepsilon_k\to 0} \frac 2 {R_0}\int_{B_{R_0/2}(y)}
|\nabla u_{\varepsilon_k} |^2 \,dx- \lim_{\varepsilon_k\to 0}
\rho^{-1}\int_{B_{ \rho}(y)}
|\nabla u_{\varepsilon_k} |^2  \,dx\\
&=\lim_{\varepsilon_k\to 0}  \int_{B_{R_0/2}\backslash
B_{\rho}(y)}\left [ |\partial_r
u_{\varepsilon_k}|^2+\varepsilon_k^2 |\nabla
u_{\varepsilon_k}|^{2} |\partial_r u_{\varepsilon_k}|^2\right ]
r^{-1} \,dx\geq 0.
 \end{align*}
 Therefore,  for each $y\in B_{R_0/2}(x_0)$ and for each $\rho\in
(0,R_0/2)$, we have
 \begin{align}
 &\frac 1 {\rho}\int_{B_{\rho }(y)}|\nabla u|^2\,dx\leq
 \lim_{\varepsilon_k\to 0}
\rho^{-1}\int_{B_{ \rho}(y)} |\nabla u_{\varepsilon_k} |^2  \,dx
 <2 \varepsilon_0
  \end{align}
 for a sufficiently small constant $\varepsilon_0>0$.

 Since $u$ is a weak harmonic map with the property (2.4),
it follows, similarly to the proof in \cite {E} (see also Lemma
3.3.13 of \cite {LW}) for stationary harmonic maps, that $u$ is
smooth inside $B_{R_0/2}(x_0)$. This proves Theorem
\ref{thm:approx}.
\end{proof}

\begin{proof}[Proof of Theorem \ref{thm:approx1}]
 Let $T$ belong to $\text{cart}^{2,1} (\Omega \times S^2)$, i.e.
$$T=G_{u_T}+L_T\times [[S^2]].
$$
The Dirichlet integral of $T$ is given by
$$D(T;\Omega\times S^2)= \int_{\Omega } |\nabla
u_T|^2\,dx +8\pi M(L_T).
$$
As $\varepsilon_k\to 0$, passing to a subsequence  we have
\[e(G_{u_{\varepsilon_k }})\wc \mu_0\]
 for a measure $\mu_0$.
 For any $\psi\in C_c^0(\tilde \Omega )$ with  $\psi \geq 0$, we
 consider the functional in $\text{cart}_{\g}^{2,1}(\tilde \Omega
 \times S^2)$
 \[ \cal E(T):=\int \psi (x) e(T).\]
 We know that $\cal E$ is lower semi-continuous with respect to the
 weak convergence in $\text{cart}^{2,1}$ which implies
 \[ e (T)(\psi )\leq \liminf_{k\to\infty} e (G_{u_{\varepsilon_k }})(\psi )\leq \mu_0(\psi ).\]
 This means $e(T)\leq \mu_0$. By Theorem \ref{thm:approx}, we have
 $\cal D(u_k,\Omega )\to \cal D (T; \Omega \times S^2)$. Then, we
 have
 \[ \mu_0(\Omega )=e(T)(\Omega ) . \]
 This shows  $\mu_0=e(T)$ as we claimed.

 On the other hand, if $U$ be a subset inside $\Omega\backslash \Omega_0$, according to \cite{GMS2} p. 436, we can prove
$L_T\llcorner U=0$.
 Therefore, we have
 \[T_u|_{U}=G_u|_{U}.\]
 Therefore
 $$\int_{U}|\nabla
 u_{\varepsilon_k}|^2\,dx \to \int_{U}|\nabla
 u_{\varepsilon_k}|^2\,dx,
 $$
 and this implies $u_k\to u$ strongly in $W^{1,2}_{\loc}(\Omega
 \backslash \bar\Omega_0 ; S^2)$.
\end{proof}

\begin{proof}[Proof of Theorem \ref{thm:manifold}]

It is well-known that $C^{\infty}_{\g}(\cal M, \cal N)$ is dense
in $W^{1,n+1}_{\g}(\cal M, \cal N)$.  Then
 \begin{align*}
 &\tilde E_{\cal M}(u) =\inf\left\{\liminf_{k\to \infty} E_{\cal M}(u_k)\,\left |\,
  \{u_k\}\subset  C^{\infty}_{\g} (\cal M,\cal N), u_k\wc u \text { weakly in } W^{1,2} (\cal M,\cal N)\right .\right
  \}\\
  &=\inf\left\{\liminf_{k\to \infty} E_{\cal M}(u_k)\,\left |\,
  \{u_k\}\subset  W^{1,n+1}_{\g}(\cal M,\cal N), u_k\wc u \text { weakly in }W^{1,2}(\cal M,\cal N)\right .\right
  \}
\end{align*}

Let $u_{\varepsilon}$ be a minimizer of the functional
$E_{\varepsilon}(\cdot ;\cal M)$ in $W^{1,n+1}_{\g}(\cal M, \cal
N)$.  Passing to a subsequence, $\nabla_k u_\varepsilon$ converges to
$\nabla_k u$ weakly in $L^2$. Then, we have
\begin{align}
& {\tilde E}_{\cal M}(u) \leq  \liminf_{\varepsilon_k\to 0}
E_{\cal M} (u_{\varepsilon_k}) \leq \liminf_{\varepsilon_k\to 0}
E_{\varepsilon_k} (u_{\varepsilon_k}; \cal M) \\
& \leq \inf_{v\in W^{1,n+1}_{\g}(\cal M,\cal N)} E_{\cal M}
(v)=\inf_{v\in
 C^{\infty}_{\g}(\cal M,\cal N)} E_{\cal M} (v).
\end{align}
From (2.5)-(2.6), $u$ is a minimizer of the relaxed energy
${\tilde E}_{\cal M}$ in $H^{1,2}$ and moreover, we have
  \begin{equation}
  \lim_{\varepsilon_k\to 0} \int_{\cal M}\varepsilon_k^2 |\nabla
  u_{\varepsilon_k}|^{1+n}
 \,d\cal M =0. \end{equation}

If $\cal N$ is  a homogeneous manifold, we claim that $u$ is a
weak harmonic map.

Let $X_i$ be the Killing field on $\cal N$ as in H\'elein \cite{Helein}. Consider the
vector field
\begin{equation*}
    \xi_i =\langle X_i, \nabla u_\varepsilon\rangle +2 \varepsilon^2 \abs{\nabla u_\varepsilon}^{n-1}\langle X_i ,\nabla u_\varepsilon\rangle.
\end{equation*}

We claim that $\mbox{div} \xi_i=0$.

To see this, let $\varphi$ be some cut-off function compactly
supported in $\Omega$. Since $u_\varepsilon$ is the minimizer of $E_{\varepsilon}$, we can use $\varphi X_i(u)$ as a testing vector
field in the Euler-Lagrange equation to get
\begin{equation*}
    \int_{\cal M} \langle \nabla_k(\varphi X_i(u_\varepsilon)),
    \nabla_k u_\varepsilon +2\varepsilon^2 \abs{\nabla u_\varepsilon}^{n-1} \nabla_k u_\varepsilon\rangle d\mu=0.
\end{equation*}
Since $X_i$ is a Killing vector field,
\begin{equation}\label{eqn:killing}
    \int_{\cal M} \nabla_k \varphi\langle X_i(u_\varepsilon), \nabla_k u_\varepsilon +2\varepsilon^2
    \abs{\nabla u_\varepsilon}^{n-1} \nabla_k u_\varepsilon\rangle d\mu=0.
\end{equation}
Therefore, $\mbox{div} \xi=0$ in distribution sense.

Since $u_{\varepsilon_k}$ converges to $u$ strongly in $L^2$ and
$X_i$ are smooth vector fields on $\cal N$, $X_i(u_{\varepsilon_k})$
converges to $X_i(u)$ strongly in $L^2$. Letting $\varepsilon_k$
go to zero in equation (\ref{eqn:killing}) and noting (2.7), we
have
\begin{equation*}
  \int_{\cal M} \nabla_k \varphi \langle X_i(u), \nabla_k u\rangle d\mu=0.
\end{equation*}
Since $X_i$ is Killing field,
\begin{equation*}
  \int_{\cal M} \langle \nabla_k (\varphi X_i(u)), \nabla_k u\rangle d\mu=0.
\end{equation*}
Since $N$ is a homogeneous space, due to the construction of $X_i$
by Helein, we can choose $\varphi_i$ so that
\begin{equation*}
  \sum_{i}\varphi_i X_i(u)
\end{equation*}
is any compactly supported vector field (along $u$). This implies
that $u$ is a weak harmonic map.

We define
 $$\Sigma =\bigcap_{R>0} \left\{x_0\in \Omega : B_R(x_0)\subset \cal M,\quad \liminf_{\varepsilon_k\to 0}
  \frac 1 {R^{n-2}}\int_{B_R(x_0)} |\nabla u_{\varepsilon} |^2 \,dx\geq \varepsilon_0\right
 \}
 $$
 for a sufficiently small constant $\varepsilon_0$. Then, $\cal
 H^{n-2}(\Sigma )<+\infty$.
 As in the proof of Theorem 1.1,  for any $x_0\notin \Sigma$ with $B_{R_0}(x_0)\subset \cal M \backslash \Sigma$,
for each $y\in B_{R_0/2}(x_0)$ and for each $\rho\in (0,R_0/2)$,
we have
 \begin{align}
 &\frac 1 {\rho^{n-2}}\int_{B_{\rho }(y)}|\nabla u|^2\,d\cal M\leq
 \lim_{\varepsilon_k\to 0}
\rho^{2-n}\int_{B_{ \rho}(y)} |\nabla u_{\varepsilon_k} |^2
\,d\cal M <2 \varepsilon_0
  \end{align}
 for a sufficiently small constant $\varepsilon_0>0$.
 It follows from the proof of  Bethuel in \cite{B} (see Lemma 3.3.14 of \cite {LW})
 that $u$ is smooth inside $\cal M\backslash \Sigma$.
\end{proof}

\section{Basic set up for the relaxed energy of Faddeev model}
The space $H_d$ mentioned in the introduction is not rigorously
defined since we have not made clear what
`$u$ approaches a constant value sufficiently fast at infinity' means. So the first task
of this section is to propose and justify a replacement for $H_d$.
We define
\begin{equation*}
    M=\{u:\Real^3\to S^2| \int_{\Real^3} \abs{\nabla u}^2+\abs{\nabla u}^4 dx< +\infty \}.
\end{equation*}
Since $\abs{\partial_k u\times \partial_l u}^2$ involves the
 $\nabla u$ up to the fourth power, $M$ is only a little smaller
than the set of finite energy maps. On the other hand, Theorem
\ref{thm:keylemma} below implies that the set of smooth maps which
are constant outside some large compact set is dense in $M$.
Remember whether this set is dense in the set of finite energy
maps is an open question and this is the main motivation of our
discussion of relaxed energy.
\begin{thm}\label{thm:keylemma}
    Let $u:\Real^3\to S^2$ be a map such that $E_F(u)$ and $\int_{\Real^3}\abs{\nabla u}^4 d\mu$ are finite. Then,
     there exists a sequence of $C^1$ maps $u_i:\Real^3\to S^2$ with $u_i$ constant outside some large ball
     (depending on $i$) such that $\nabla u_i$ converges to $\nabla u$ strongly in $L^4(\Real^3)\cap L^2(\Real^3)$.
\end{thm}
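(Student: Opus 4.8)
The plan is to produce the approximating sequence by a two-stage procedure: first truncate $u$ so that it becomes constant outside a large ball, controlling the error in $L^4\cap L^2$ of the gradient; then mollify the truncated map and project back onto $S^2$, again controlling the error. The truncation is the substantive step, so I describe it first. Since $E_F(u)<\infty$ and $\int_{\R^3}|\nabla u|^4\,d\mu<\infty$, we certainly have $\nabla u\in L^2\cap L^4$. Fix a base point and consider annuli $A_R=B_{2R}\setminus B_R$. Because $\int_{\R^3}(|\nabla u|^2+|\nabla u|^4)\,dx<\infty$, the quantity $\int_{A_R}(|\nabla u|^2+|\nabla u|^4)\,dx\to 0$ as $R\to\infty$; moreover, by a pigeonhole/mean-value argument on dyadic sub-annuli one can choose radii $R_i\to\infty$ and thicknesses so that on the sphere $\partial B_{R_i}$ (or a thin shell around it) the trace of $|\nabla u|^2+|\nabla u|^4$ is integrable and small, and in particular the oscillation of $u$ on that shell is small. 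Then on the shell one replaces $u$ by a radial interpolation from $u|_{\partial B_{R_i}}$ to a fixed constant $c$ chosen as (a nearby point of $S^2$ to) an average of $u$ over the shell; the Poincaré inequality on the sphere bounds $\int_{\text{shell}}|\nabla(\text{interpolant})|^2$ and $\int_{\text{shell}}|\nabla(\text{interpolant})|^4$ by the corresponding tangential energies of $u$ on the shell, which are small. Outside $B_{R_i+\text{(shell width)}}$ we set the map equal to $c$. This yields maps $v_i$ that are constant outside a ball, agree with $u$ on $B_{R_i}$, and satisfy $\nabla v_i\to\nabla u$ in $L^2\cap L^4$.

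The second stage is standard smoothing. Each $v_i$ has gradient in $L^2\cap L^4$ and is constant outside a compact set, hence in particular $v_i\in W^{1,4}_{\loc}$, so by Sobolev embedding in dimension three $v_i$ is (Hölder) continuous; a mollification $v_i\star\phi_\delta$ converges to $v_i$ in $W^{1,2}\cap W^{1,4}$ and uniformly as $\delta\to 0$. Since $v_i$ takes values in $S^2$ and the mollified map is uniformly close to $S^2$, the nearest-point projection $\Pi:S^2_{1/2}\to S^2$ (smooth in a tubular neighbourhood) is applicable, and $u_{i,\delta}:=\Pi(v_i\star\phi_\delta)$ is a $C^1$ map into $S^2$, constant outside a slightly larger ball, with $\nabla u_{i,\delta}\to\nabla v_i$ in $L^2\cap L^4$ as $\delta\to 0$ (using that $D\Pi$ is bounded on the tubular neighbourhood and that $v_i\star\phi_\delta\to v_i$ strongly in $W^{1,2}\cap W^{1,4}$). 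A diagonal argument in $i$ and $\delta$ then produces the desired sequence $u_i$.

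The main obstacle is the first stage: arranging that the trace of $u$ on suitable spheres $\partial B_{R_i}$ is well enough behaved that the radial interpolation to a constant costs only a small amount of $L^2$ \emph{and} $L^4$ gradient energy, \emph{and} that $u$ restricted to that sphere genuinely lies within a small geodesic ball of $S^2$ so that the interpolation makes sense as an $S^2$-valued map. For the $L^2$ part this is the classical slicing argument used in density results for Sobolev maps into manifolds; the extra point here is that one must simultaneously control the $L^4$ norm of the gradient of the interpolant, which is why one needs the finiteness of $\int|\nabla u|^4$ and a pigeonhole choice of radii that makes the spherical $L^4$-energy of $u$ on $\partial B_{R_i}$ small. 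Once good radii are selected, everything downstream (Poincaré on $S^2$, mollification, projection) is routine. I would also remark that the statement as phrased asks only for $\nabla u_i\to\nabla u$ in $L^4\cap L^2$, not for $u_i\to u$ in any norm, which is consistent with the fact that we only control the gradient of the truncated map on the large ball and replace $u$ by an unrelated constant far away.
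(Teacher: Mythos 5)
Your architecture coincides with the paper's: select good radii $R_i\to\infty$ by a Fubini/pigeonhole argument so that $\int_{\partial B_{R_i}}(|\nabla u|^2+|\nabla u|^4)\,d\sigma\le C_1/R_i$, replace $u$ outside $B_{R_i}$ by a transition in an annulus to the normalized average $\xi_i/|\xi_i|$, project onto $S^2$, then smooth. The gap sits exactly at the step you yourself single out as the main obstacle, and the justification you offer for it does not work as stated. You claim the pigeonhole choice makes ``the oscillation of $u$'' on $\partial B_{R_i}$ small because the spherical $L^4$-energy there is small. But the scale-invariant quantities on the two-dimensional slice are $\int_{\partial B_{R_i}}|\nabla u|^2\,d\sigma\le C_1/R_i$, which is conformally borderline in dimension two and carries no $L^\infty$ control (a logarithmic spike of arbitrarily small Dirichlet energy on $S^2$ still reaches the antipode of its average), and $R_i^2\int_{\partial B_{R_i}}|\nabla u|^4\,d\sigma\le C_1R_i$, which \emph{grows}, so Morrey's inequality on the rescaled slice gives an oscillation bound of order $(C_1R_i)^{1/4}$, i.e.\ nothing. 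You have therefore not excluded that $u|_{\partial B_{R_i}}$ comes arbitrarily close to the antipode of your constant $c$; in that case the linear radial interpolant passes arbitrarily near the origin, and after nearest-point projection the resulting map has a singularity of type $x/|x|$, whose gradient lies in $L^2(\Real^3)$ but \emph{not} in $L^4(\Real^3)$ --- precisely the norm the theorem must control.

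The paper sidesteps this by taking $V$ to be the \emph{harmonic} extension on $B_{2R_i}\setminus B_{R_i}$ and quoting the lemma of Lin--Yang (\cite{LY1}, p.~292; see also Lemma 9.7 of \cite{Hang}) that the image of $V$ lies in $B_{3/2}\setminus B_{1/2}$ for large $i$, after which $V/|V|$ is a Lipschitz postcomposition; the $L^4$ bound on $\nabla V$ is then obtained not by a naive Poincar\'e estimate but through the trace theorem, elliptic estimates in $W^{5/4,4}$ and the embedding $W^{5/4,4}\hookrightarrow W^{1,6}$, leading to (\ref{eqn:extension}). Your linear-interpolation variant could likely be rescued: a Gagliardo--Nirenberg interpolation of the two slice bounds on the rescaled unit sphere, $\|u-\xi_i\|_{L^\infty}\le C\|\nabla u\|_{L^4}^{2/3}\|u-\xi_i\|_{L^2}^{1/3}\le CC_1^{1/3}$, does give small oscillation once $C_1$ is chosen small, and this is exactly where \emph{both} the $L^2$ and the $L^4$ slice bounds must be used together. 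No such argument appears in your write-up, so as it stands the central step is unproven. The second stage (mollification and nearest-point projection) is routine and fine.
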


Before proving Theorem \ref{thm:keylemma}, we discuss some immediate consequences of it.
\begin{cor}\label{integerdegree}
    For each $u\in M$, $Q(u)$ is an integer.
\end{cor}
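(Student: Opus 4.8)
The plan is to approximate $u\in M$ by smooth maps that are constant near infinity --- for which $Q$ is manifestly an integer --- and to show that $Q$ is continuous along such an approximation. Concretely, apply Theorem \ref{thm:keylemma} to produce $C^1$ maps $u_i\colon\Real^3\to S^2$, each constant outside some ball, with $\nabla u_i\to\nabla u$ strongly in $L^4(\Real^3)\cap L^2(\Real^3)$. Each $u_i$ factors through a $C^1$ map $S^3\to S^2$, so its topological Hopf invariant is a well-defined integer; moreover, by the classical Whitehead integral formula this invariant coincides with the analytic expression $\frac{1}{16\pi^2}\int_{\Real^3}\eta\wedge u_i^*\omega_{S^2}$ for \emph{any} primitive $\eta$ of $u_i^*\omega_{S^2}$, in particular for the one given by \eqref{def:eta}. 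Hence $Q(u_i)\in\Z$ for every $i$.

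It remains to show $Q(u)=\lim_{i\to\infty}Q(u_i)$; since $\Z$ is closed, this yields $Q(u)\in\Z$. Write $F:=u^*\omega_{S^2}$ and $F_i:=u_i^*\omega_{S^2}$. These are $2$-forms with $|F|\le C|\nabla u|^2$, hence $F\in L^1(\Real^3)\cap L^2(\Real^3)\subset L^{6/5}(\Real^3)$ (and likewise $F_i$), with norms controlled by $\|\nabla u\|_{L^2}^2$ and $\|\nabla u\|_{L^4}^2$. With the choice \eqref{def:eta} the $1$-form $\eta=\eta[F]$ depends linearly on $F$ and equals, componentwise, a first-order derivative of the Newtonian potential $-\tfrac{1}{4\pi|x|}\star F$, i.e.\ a Riesz transform of the Riesz potential $I_1F$; by the $L^p$-boundedness of Riesz transforms and the Sobolev inequality $I_1\colon L^2(\Real^3)\to L^6(\Real^3)$ one gets $\|\eta[F]\|_{L^6}\le C\|F\|_{L^2}$. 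Consequently
\[
\Bigl|\int_{\Real^3}\eta[F]\wedge F\Bigr|\le\|\eta[F]\|_{L^6}\,\|F\|_{L^{6/5}}\le C\,\|F\|_{L^2}\,\|F\|_{L^{6/5}},
\]
which shows both that $Q(u)$ is well defined on $M$ and that $F\mapsto\int\eta[F]\wedge F$ is a bounded quadratic functional of $F$. Splitting
\[
\int\eta[F_i]\wedge F_i-\int\eta[F]\wedge F=\int\eta[F_i-F]\wedge F_i+\int\eta[F]\wedge(F_i-F)
\]
and applying the bound above, $Q(u_i)\to Q(u)$ follows once we know $F_i\to F$ in $L^{6/5}\cap L^2$.

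To get $F_i\to F$ in those norms, expand $F_i-F$ into terms of the shape $u^a\,d(u_i^b-u^b)\wedge du_i^c$ and $(u_i^a-u^a)\,du_i^b\wedge du_i^c$. The terms carrying a gradient difference tend to $0$ because $\nabla u_i\to\nabla u$ in $L^4$ while the remaining gradient factor stays bounded in $L^4$; the term with the undifferentiated factor $(u_i-u)$ is estimated by $\|u_i-u\|_{L^\infty}\|\nabla u_i\|_{L^4}^2$, and $u_i-u\to 0$ in $L^\infty$ since $\nabla(u_i-u)\to 0$ in $L^4(\Real^3)$ and $u_i$, $u$ approach the same constant at infinity (alternatively, one passes to an a.e.\ convergent subsequence and invokes a generalized dominated convergence theorem with dominating functions $|\nabla u_i|^2\to|\nabla u|^2$ in $L^1$). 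The main obstacle is exactly this continuity step: one must pin down the potential-theoretic mapping properties of $\eta[\,\cdot\,]$ so that the integral defining $Q$ converges absolutely on $M$ and depends continuously on $F$, and one must control the undifferentiated factor $u_i-u$, which is the only place where the behaviour of the approximating sequence near infinity enters.
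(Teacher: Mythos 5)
Your proposal is correct and follows essentially the same route as the paper: approximate $u$ by the maps $u_i$ from Theorem \ref{thm:keylemma}, observe that each $Q(u_i)$ is an integer, and pass to the limit using the potential-theoretic mapping properties of the Coulomb gauge $\eta[\,\cdot\,]$ together with the strong convergence $u_i^*\omega_{S^2}\to u^*\omega_{S^2}$. The only difference is the choice of duality pairing (your $L^6$--$L^{6/5}$ estimate versus the paper's $W^{1,p}$, $1<p\leq 2$, hence $L^2$--$L^2$ pairing), which is immaterial.
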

\begin{proof}
    Let $u_i$ be the sequence in Theorem \ref{thm:keylemma}.     $Q(u_i)$ is an integer since $u_i$ is of class $C^1$ and constant at infinity.
    \begin{equation*}
        Q(u_i)=\frac{1}{16\pi^2}\int_{\Real^3}\eta_i\wedge u^*_i \omega_{S^2}.
    \end{equation*}
    By the definition of $\eta_i$ (see equation (\ref{def:eta})) and the fact that $\nabla u_i\to \nabla u$ strongly
    in $L^4(\Real^3)\cap L^2(\Real^3)$ (which implies $u^*_i\omega_{S^2} \to u^*_i\omega_{S^2}$ strongly in $L^2(\Real^3)\cap L^1(\Real^3)$),
    $\eta_i$ converges to $\eta$ in $W^{1,p}$ for $1<p\leq 2$. In particular, $\eta_i$ converges strongly to $\eta$ in $L^2(\Real^3)$. Hence
    \begin{equation*}
        \lim_{i\to \infty}Q(u_i)=Q(u),
    \end{equation*}
    which implies $Q(u)\in \mathbb Z$.
\end{proof}

For any integer $d$, we define
\begin{equation*}
    M_d=\{u\in M| Q(u)=d\}.
\end{equation*}

As remarked earlier in this section, $M$ is only a little smaller than
the set of finite energy maps. We {\it may assume} $H_d\subset M_d$ and $H_d$ contains all smooth maps
which are constant outside some compact set. Another
consequence of Theorem \ref{thm:keylemma} is
\begin{equation}\label{eqn:weclaim}
        \inf_{u\in H_d} E_F(u)=\inf_{u\in M_d} E_F(u).
    \end{equation}
In fact,
    since $H_d\subset M_d$, we know
    \begin{equation*}
        \inf_{u\in H_d} E_F(u)\geq \inf_{u\in M_d} E_F(u).
    \end{equation*}
    On the other hand, for any $u\in M_d$, due to Theorem \ref{thm:keylemma}, there exists a sequence of $u_i\in H_d$ such that
    \begin{equation*}
        \begin{array}[]{c}
            E_F(u_i)\to E_F(u).
        \end{array}
    \end{equation*}
    Hence, the reverse inequality is also true. This proves our
    claim.

Moreover, according to the above, the definition of relaxed energy given in the introduction can be rewritten as
\begin{equation*}
    \tilde{E}_F(u)=\inf\left \{
    \liminf_{k\to \infty} E_F(u_k)\,| \quad \{u_k\} \subset M_d, u_k\rightharpoonup u \mbox{ weakly
    }
    \right\}.
\end{equation*}

The proof of Theorem \ref{thm:keylemma} depends on the following elliptic estimates.

Let $B_r$ be the ball with center at $0$ and radius $r$ in
$\Real^3$, let $u$ be a map from $B_1$ to $S^2$
such that
\begin{equation*}
    \int_{\partial B_1}\abs{\nabla u}^2 +R^{-2}\abs{\nabla u}^4 d\sigma=\epsilon,
\end{equation*}
for some positive $R$ (later $R$ will be large and $\epsilon$
small), and let $\xi$ be the average of $u$ over $\partial B_1$,
i.e.
\begin{equation*}
    \xi=\frac{1}{4\pi}\int_{\partial B_1}u d\sigma.
\end{equation*}
By the Poincar\'e inequality
 and using that $|u|=1$, we have
\begin{equation}\label{eqn:epsilon}
    1-\abs{\xi}\leq   1-\abs{\xi}^2 = \frac{1}{4\pi}\int_{\partial B_1}\abs{u-\xi}^2 d\sigma  \leq C \epsilon.
\end{equation}
As in \cite{LY1}, consider the harmonic function $V:B_2\setminus
B_1\to \Real^3$ such that
\begin{equation*}
    \triangle V=0, \quad \mbox{ in } B_2\setminus B_1,
\end{equation*}
and
\begin{equation*}
    V|_{\partial B_2}=\frac{\xi}{\abs{\xi}}, \quad V|_{\partial B_1}=u.
\end{equation*}
It follows from  (\ref{eqn:epsilon}) that
\begin{equation*}
    \norm{V-\xi}_{W^{1,4}(\partial B_2)}\leq C(1-\abs{\xi})\leq C\epsilon .
\end{equation*}
The average of $V-\xi$ on $\partial B_1$ is zero. By the Poincar\'e
inequality,
\begin{equation*}
    \norm{u-\xi}_{W^{1,4}(\partial B_1)}\leq C\norm{\nabla u}_{L^4(\partial B_1)}.
\end{equation*}

By the trace theorem of the Sobolev space, we   find
$w:B_2\setminus B_1\to \Real^3$ with the same boundary value as
$V-\xi$ on $\partial B_2\cup \partial B_1$ such that
\begin{equation*}
    \norm{w}_{W^{5/4,4}(B_2\setminus B_1)}\leq C\norm{V-\xi}_{W^{1,4}(\partial B_1\cup \partial B_2)}\leq C(\norm{\nabla u}_{L^4(\partial B_1)}+\epsilon ).
\end{equation*}
Then, $V-\xi-w$ has zero boundary value and
\begin{equation*}
    \triangle (V-\xi-w)=-\triangle w.
\end{equation*}
By the elliptic estimate, we have
\begin{equation*}
    \norm{V-\xi-w}_{W^{5/4,4}(B_2\setminus B_1)}\leq C (\norm{\nabla u}_{L^4(\partial B_1)}+\epsilon ).
\end{equation*}
Hence,
\begin{equation*}
    \norm{V-\xi}_{W^{5/4,4}(B_2\setminus B_1)}\leq C (\norm{\nabla u}_{L^4(\partial B_1))}+\epsilon ).
\end{equation*}

The following is a Sobolev's embedding theorem, which  is a
special case of Theorem 7.58 in Adams' book \cite{Adams}. For
reader's convenience, we quote it here.
\begin{thm}
    Let $s>0$, $1<p< q< +\infty$, and $\chi=s-\frac{n}{p}+\frac{n}{q}>0$. Then
    \begin{equation*}
        W^{s,p}(\Real^n)\to W^{\chi,q}(\Real^n).
    \end{equation*}
    The result holds true for domains with smooth boundary.
\end{thm}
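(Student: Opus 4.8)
The plan is to reduce the statement to the model space $\Real^n$ and then read off the embedding from the Bessel--potential description of the spaces $W^{s,p}$. For a bounded domain $\Omega$ with smooth boundary one composes a Stein-type total extension operator, bounded $W^{s,p}(\Omega)\to W^{s,p}(\Real^n)$ for every $s\ge 0$ and $1<p<\infty$, with the trivial restriction $W^{\chi,q}(\Real^n)\to W^{\chi,q}(\Omega)$; so it suffices to argue on $\Real^n$. There, for $1<p<\infty$, I use (on the Bessel--potential scale) that $f\in W^{s,p}$ means $f=G_s\ast g$ with $g=(1-\laplacian)^{s/2}f\in L^p$ and $\norm{f}_{W^{s,p}}\simeq\norm{g}_{L^p}$, where $G_s$ is the Bessel kernel.

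Applying $(1-\laplacian)^{\chi/2}$ gives $(1-\laplacian)^{\chi/2}f=G_{s-\chi}\ast g$, and by the definition of $\chi$,
\begin{equation*}
    s-\chi=\frac np-\frac nq=:\alpha ,
\end{equation*}
where $1<p<q<\infty$ forces $0<\alpha<n$. Thus the whole theorem reduces to the single claim that $h\mapsto G_\alpha\ast h$ is bounded from $L^p(\Real^n)$ to $L^q(\Real^n)$. To see this I use $G_\alpha\ge 0$, the bound $G_\alpha(x)\le C\abs{x}^{\alpha-n}$ on $\{\abs{x}\le 1\}$, and the exponential decay of $G_\alpha$ on $\{\abs{x}>1\}$ (so that $G_\alpha\mathbf 1_{\{\abs{x}>1\}}\in L^r(\Real^n)$ for every $r$). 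Splitting $G_\alpha$ across $\abs{x}=1$: the far piece is controlled by Young's inequality with $1+\tfrac1q=\tfrac1p+\tfrac1r$, which admits $r\in[1,\infty)$ since $\tfrac1q\le\tfrac1p$; the near piece is dominated by convolution with the Riesz kernel $I_\alpha(x)=c_{n,\alpha}\abs{x}^{\alpha-n}$, and the Hardy--Littlewood--Sobolev inequality gives $\norm{I_\alpha\ast h}_{L^q}\le C\norm{h}_{L^p}$ precisely under $\tfrac1q=\tfrac1p-\tfrac\alpha n$, which here is an identity because $\alpha=n/p-n/q$. Combining the two bounds yields $\norm{(1-\laplacian)^{\chi/2}f}_{L^q}\le C\norm{g}_{L^p}=C\norm{f}_{W^{s,p}}$, i.e.\ $f\in W^{\chi,q}$ with the asserted estimate.

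The only genuinely non-elementary input is the Hardy--Littlewood--Sobolev inequality; the smooth-boundary extension operator, the Bessel-kernel asymptotics, Young's inequality, and the identification of $W^{s,p}$ with the Bessel-potential space are all standard. The hypotheses enter transparently: the precise formula $\chi=s-n/p+n/q$ makes the HLS scaling relation an identity, $p>1$ and $q<\infty$ are the sharp range for HLS, and $\chi>0$ places us in the easy subcritical regime (the borderline $\chi=0$ is exactly the sharp Sobolev embedding $W^{s,p}\to L^{np/(n-sp)}$, still true, while $q=\infty$ is genuinely excluded). One caveat: in Adams' book $W^{s,p}$ for non-integer $s$ and $p\neq 2$ is the Sobolev--Slobodeckij space rather than the Bessel-potential space used above; for that definition I would instead deduce the embedding by real interpolation between the integer-order Sobolev inequalities (or, when $s\in(0,1)$, prove the fractional Sobolev inequality directly from the Gagliardo seminorm via the usual dyadic truncation argument), obtaining the same conclusion through essentially the same analysis. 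The main obstacle in any route is thus just the fractional/Riesz-potential estimate, i.e.\ Hardy--Littlewood--Sobolev.
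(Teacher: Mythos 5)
The paper gives no proof of this statement at all: it is quoted verbatim as a special case of Theorem 7.58 in Adams' book, purely for the reader's convenience, and is used only once (with $s=5/4$, $p=4$, $\chi=1$, $q=6$, $n=3$) to pass from the $W^{5/4,4}$ bound on $V-\xi$ to a $W^{1,6}$ bound. So there is nothing in the paper to compare your argument against step by step; what you have written is a genuine, self-contained proof where the authors simply cite. Your route is the standard one and is correct: the identity $s-\chi=\frac{n}{p}-\frac{n}{q}=:\alpha\in(0,n)$ reduces everything to the mapping property $G_\alpha\ast:L^p\to L^q$, the near-origin part of $G_\alpha$ is controlled by the Riesz kernel and Hardy--Littlewood--Sobolev (whose scaling relation is exactly an identity here), the far part by Young's inequality with an $L^r$ kernel, and the case of a smooth bounded domain follows by composing with a total extension operator. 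Your caveat is also the right one to flag: Adams' $W^{s,p}$ for fractional $s$ and $p\ne 2$ is the Sobolev--Slobodeckij scale rather than the Bessel-potential scale, so strictly speaking one must either run the argument on that scale (real interpolation between integer-order embeddings, or a direct Gagliardo-seminorm estimate for $s\in(0,1)$) or invoke the standard comparison between the two scales; you acknowledge this and indicate a correct fix, so there is no gap. In short, your proposal is a valid proof of a result the paper treats as a black box.
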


Using the above theorem in the case
$s=5/4$, $p=4$, $\chi=1$ and
$q=6$,
\begin{equation*}
    \norm{V-\xi}_{W^{1,6}(B_2\setminus B_1)}\leq C (\norm{\nabla u}_{L^4(\partial B_1)}+\epsilon ).
\end{equation*}
In particular,
\begin{equation}\label{eqn:V6}
    \int_{B_2\setminus B_1}\abs{\nabla V}^6 dx\leq C (\int_{\partial B(1)} \abs{\nabla u}^4 d\sigma)^{3/2}
    +C\epsilon^6< C \epsilon^{3/2} R^3+C\epsilon^6<C \epsilon^{3/2}R^3.
\end{equation}
The last inequality holds because, in the form we need it,
$\varepsilon$ is small and  $R$ large. Applying the H\"older
inequality, we have
\begin{equation}\label{eqn:V4}
    \int_{B_2\setminus B_1} \abs{\nabla V}^4 dx\leq C \epsilon R^2.
\end{equation}
A similar argument works for the $L^2$ norm of $\nabla u$.
Therefore, we conclude
\begin{equation}
    \int_{B_2\setminus B_1} \abs{\nabla V}^2+R^{-2}\abs{\nabla V}^4 dx\leq C  \int_{\partial B_1}\abs{\nabla u}^2 +R^{-2}\abs{\nabla u}^4 d\sigma.
    \label{eqn:extension}
\end{equation}

With this estimate, we can now prove Theorem \ref{thm:keylemma}.

\begin{proof}[Proof of Theorem \ref{thm:keylemma}]
    Let $u$ be a map from $\Real^3\to S^2$ such that
    \begin{equation*}
        \int_{\Real^3} \abs{\nabla u}^2+\abs{\nabla u}^4 dx<C.
    \end{equation*}
    It suffices to show that for any $\varepsilon>0$ there is a $C^1$ map $w$ from $\Real^3$ to $S^2$ such that

    (1) $w$ is constant outside a large ball;

    (2)
    \begin{equation*}
        \int_{\Real^3} \abs{\nabla (u-w)}^2+\abs{\nabla (u-w)}^4 dx<\varepsilon.
    \end{equation*}

    Since
    \begin{equation*}
        \int_0^\infty \int_{\partial B_r} \abs{\nabla u}^2+\abs{\nabla u}^4 d\sigma dr< C,
    \end{equation*}
    for any $C_1>0$ small, there exists a sequence of $R_i$ going to infinity such that
    \begin{equation*}
        \int_{\partial B_{R_i}} \abs{\nabla u}^2 +\abs{\nabla u}^4 d\sigma <\frac{C_1}{R_i}.
    \end{equation*}
    Let $\xi_i$ be the average of $u$ over $\partial B_{R_i}$,
    \begin{equation*}
        \xi_i=\frac{1}{\abs{\partial B_{R_i}}}\int u d\sigma.
    \end{equation*}
    Then
    \begin{equation*}
        \frac{1}{\abs{B_{R_i}}}\int_{\partial B_{R_i}} \abs{u-\xi_i}^2\leq \int_{\partial B_{R_i}} \abs{\nabla u}^2 d\sigma\to 0.
    \end{equation*}

    Define $V$ to be the harmonic function defined on $B_{2R_i}\setminus B_{R_i}$ with boundary value
    \begin{equation*}
        V|_{\partial B_{R_i}}=u,\quad V|_{\partial B_{2R_i}}=\frac{\xi_i}{\abs{\xi_i}}.
    \end{equation*}
    Due to the estimate (\ref{eqn:extension}) and a scaling, we know
    \begin{equation*}
        \int_{B_{2R_i}\setminus B_{R_i}} \abs{\nabla V}^2+\abs{\nabla V}^4 dx\leq C R_i\int_{\partial B_{R_i}} \abs{\nabla u}^2 +\abs{\nabla u}^4 d\sigma < C_1.
    \end{equation*}

    For fixed $\varepsilon$, choose $C_1<\varepsilon/96$ and $R_i$ so large such that
    \begin{equation*}
        \int_{\Real^3\setminus B_{R_i}} \abs{\nabla u}^2+\abs{\nabla u}^4 dx<\varepsilon/6.
    \end{equation*}

    We then define $\tilde{w}$ to be

    (1) $u$ in $B_{R_i}$;

    (2) $\frac{V}{\abs{V}}$ in $B_{2R_i}\setminus B_{R_i}$;

    (3) $\frac{\xi_i}{\abs{\xi_i}}$ outside $B_{2R_i}$.

    Since, as it is proved in page 292 of \cite{LY1} (see also Lemma 9.7 of \cite{Hang}), the image of $V$ lies in $B_{3/2}\setminus B_{1/2}$
    for sufficiently large $i$, and
    the nearest-point-projection map restricted to $B_{3/2}\setminus B_{1/2}$ is Lipschitz and the Lipschitz constant is $2$,
    by replacing $\tilde{w}$ with its projection onto $S^2$, we conclude that $\tilde{w}$ has values in $S^2$,
    \begin{equation*}
      \int_{B_{2R_i}\setminus B_{R_i}} \abs{\nabla \tilde{w}}^2+\abs{\nabla \tilde{w}}^4
      dx<\frac{\varepsilon}{6}
    \end{equation*}
and
    \begin{equation*}
      \int_{\Real^3} \abs{\nabla (\tilde{w}-u)}^2 +\abs{\nabla (\tilde{w}-u)}^4
      dx<\varepsilon/3.
    \end{equation*}
    Finally, since we can modify $\tilde{w}$ to a smooth map $w$ with the same properties of $\tilde{w}$ in such a way that
    \begin{equation*}
      \int_{\Real^3} \abs{\nabla ({w}-u)}^2 +\abs{\nabla ({w}-u)}^4 dx<\varepsilon,
    \end{equation*}
we conclude the proof.

\end{proof}

\begin{rem}\label{rem:s3}
    Theorem \ref{thm:keylemma} holds true for $u:\Real^3\to S^3$ with finite energies $\int_{\Real^3}\abs{\nabla u}^4 dx$ and $\int_{\Real^3} \abs{\nabla u}^2dx$, the proof being the same.
\end{rem}

\section{Hopf lift and decomposition lemma}

In this section, we first prove a theorem about the Hopf lift. Our proof depends on Theorem \ref{thm:keylemma}
 in the previous section. Then, we use the Hopf lift to prove a key lemma, the cubic decomposition lemma, which will be essential in the proof of our main result.

Let us start from recalling some basic definitions of the Hopf
lift (see \cite{Hopf} for details). Let $\mathcal M$ be any
complete Riemannian manifold whose second cohomology group
$H^2(\mathcal M,\mathbb R)$ is trivial. Let $\Pi$ be the Hopf map
from $S^3\subset \Real^4$ to $S^2\subset \Real^3$ given by
\begin{equation*}
    \Pi(x_1,x_2,x_3,x_4)=\left(
    \begin{array}[]{c}
        2(x_1x_3+x_2x_4) \\
        2(x_2x_3-x_1x_4) \\
        x_1^2+x_2^2-x_3^2-x_4^2
    \end{array}
    \right).
\end{equation*}
Pulling back the volume form $\omega_{S^2}$ via $\Pi$ gives
\begin{equation*}
  \Pi^*\omega_{S^2}=4(dx_1dx_2+dx_3dx_4)=2d\alpha,
\end{equation*}
where $\alpha=x_1dx_2-x_2dx_1+x_3dx_4-x_4dx_3$. One can check by using Stokes theorem that
\begin{equation}\label{eqn:unitdegree}
    \frac{1}{16\pi^2}\int_{S^3} 2\alpha\wedge \Pi^*\omega_{S^2}=1.
\end{equation}

For any map $u:\mathcal M\to S^2$, a map $\tilde{u}$ from $\mathcal M$ to $S^3$ is called a Hopf lift of $u$ if
\begin{equation*}
  \Pi\circ \tilde{u}=u.
\end{equation*}
Any smooth 1 form $\eta$ on $\mathcal M$ satisfying
\begin{equation*}
  d\eta=u^*\omega_{S^2}
\end{equation*}
is called a gauge of $u$. For a fixed $u$, the gauge $\eta$ is not unique. However, if one fixes a Hopf lift $\tilde{u}$ of $u$, then $\tilde{u}$ determines a gauge for $u$ by
\begin{equation*}
  \eta=2u^*\alpha.
\end{equation*}
On the other hand, any gauge $\eta$ of $u$ determines a Hopf lift $\tilde{u}$ such that the above equation is true and such a map $\tilde{u}$ is unique up to multiplication by $e^{i\theta}$ for some constant $\theta\in \Real$ and
\begin{equation}\label{eqn:lift}
  \abs{\nabla \tilde{u}}^2=\frac{1}{4}\abs{\eta}^2+\abs{\nabla u}^2.
\end{equation}
See Lemma 2.1 of \cite{Hopf} for a proof.

We will generalize the above definition of Hopf lift and equation
(\ref{eqn:lift}) to Sobolev mapping $u$ with bounded Faddeev
energy and finite $\int_{\Real^3}\abs{\nabla u}^4 dx$. The proof
relies on Theorem \ref{thm:keylemma}. For such $u$, due to Theorem
\ref{thm:keylemma}, there exists a sequence of smooth maps $\{u_i\}$
such that $u_i$ is constant near infinity and $\nabla u_i$ converges to
$\nabla u$ strongly in both $L^2(\Real^3)$ and $L^4(\Real^3)$.

For each $u_i$, we define the Coulomb gauge $\eta_i$ by requiring
$\eta_i=\delta(-\frac{1}{4\pi\abs{x}}\star u_i^*\omega_{S^2})$.
Here $\star$ means convolution and $\delta$ is the formal adjoint
of $d$. One can check that
$d\eta_i=u_i^*\omega_{S^2}$ and $\delta \eta_i=0$.
Given $u_i$ and $\eta_i$, there
exists a lift $\bar{u}_i$ (called the Coulomb lift) such that
\begin{equation}\label{eqn:xx}
  \abs{\nabla \bar{u}_i}^2=\frac{1}{4}\abs{\eta_i}^2+\abs{\nabla u_i}^2.
\end{equation}
Moreover,
\begin{equation}\label{eqn:eta}
    \eta_i=\bar{u}_i^*(2\alpha)
\end{equation}
(see \cite{Hopf} for a proof).

Since $\nabla u_i$ strongly converge to $\nabla u$ in
$L^4(\Real^3)$, we know that $\eta_i$ converge to the Coulomb gauge
$\eta$ of $u$ strongly in $W^{1,2}(\Real^3)$. Due to the Sobolev
embedding, $\eta_i$ converge strongly in $L^2(\Real^3)$ and
$L^4(\Real^3)$. By  (\ref{eqn:xx}), we have
\begin{equation*}
  \int_{\Real^3} \abs{\nabla \bar{u}_i}^2 +\abs{\nabla \bar{u}_i}^4 dx<C.
\end{equation*}
Since the image of $\bar{u}_i$ is in $S^3$, $u_i\in W_{loc}^{1,4}(\Real^3)$. Hence, for each bounded domain $\Omega$,
we can take subsequence of $\nabla \bar{u}_i$ such that it converges weakly in $L^2$ and $L^4$ on $\Omega$. Moreover, due to equation (\ref{eqn:xx}) and its square,
we know the $L^2(\Omega)$ and $L^4(\Omega)$ norms of $\nabla \bar{u}_i$ also converge. Denote the limit by $\bar{u}$, then $\bar{u}_i$ converges to
$\bar{u}$ strongly in $W^{1,4}_{loc}(\Real^3)$. In particular,
\begin{equation*}
    \int_{\Real^3} \abs{\nabla \bar{u}}^4 dx<C.
\end{equation*}

 By the definition of Hopf lift and
(\ref{eqn:eta}),  we have for each $i$
\begin{equation*}
  \bar{u}_i^*(2\alpha \wedge \Pi^*\omega_{S^2}) = \eta_i\wedge u_i^* \omega_{S^2}.
\end{equation*}
For any bounded domain $\Omega$, we integrate the above identity
over $\Omega$ and take the limit $i\to \infty$ to get
\begin{equation*}
  \int_\Omega \bar{u}^*(2\alpha \wedge \Pi^*\omega_{S^2})=\int_\Omega \eta\wedge u^*\omega_{S^2}.
\end{equation*}

Since $\Omega$ is arbitrary, we have proved that
\begin{thm}\label{thm:lift}
  For each $u$ with finite Faddeev energy and finite $\int_{\Real^3} \abs{\nabla u}^4 dx$, there exists a Hopf lift $\bar{u}$ of $u$ such that
  \begin{equation*}
      \frac{1}{16\pi^2}\int_{\Real^3} \bar{u}^*(2\alpha\wedge \Pi^*\omega_{S^2})=\frac{1}{16\pi^2}\int_{\Real^3}\eta\wedge u^*\omega_{S^2}\in \mathbb Z.
  \end{equation*}
  Moreover,
  \begin{equation*}
    \int_{\Real^3} \abs{\nabla \bar{u}}^4 dx\leq C.
  \end{equation*}
\end{thm}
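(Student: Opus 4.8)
The plan is to realize $\bar{u}$ as a limit of Coulomb lifts of the smooth approximations supplied by Theorem \ref{thm:keylemma}. First I would apply that theorem to obtain $C^1$ maps $u_i\colon\Real^3\to S^2$, each constant outside a large ball, with $\nabla u_i\to\nabla u$ strongly in $L^2(\Real^3)\cap L^4(\Real^3)$. For each $i$ set $\eta_i=\delta(-\frac{1}{4\pi\abs{x}}\star u_i^*\omega_{S^2})$; since $u_i^*\omega_{S^2}$ is a closed, compactly supported $2$-form, one checks $d\eta_i=u_i^*\omega_{S^2}$ and $\delta\eta_i=0$, so $\eta_i$ is the Coulomb gauge of $u_i$. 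By the elementary theory of the Hopf fibration (Lemma 2.1 of \cite{Hopf}) there is then a Coulomb lift $\bar{u}_i\colon\Real^3\to S^3$ with $\Pi\circ\bar{u}_i=u_i$, satisfying the pointwise identity $\abs{\nabla\bar{u}_i}^2=\frac{1}{4}\abs{\eta_i}^2+\abs{\nabla u_i}^2$ together with $\eta_i=\bar{u}_i^*(2\alpha)$, and hence the wedge identity $\bar{u}_i^*(2\alpha\wedge\Pi^*\omega_{S^2})=\eta_i\wedge u_i^*\omega_{S^2}$.

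The second step is the passage to the limit. Because convolution with the Newtonian potential gains two derivatives, $\eta_i\to\eta$ strongly in $W^{1,2}(\Real^3)$, where $\eta$ is the Coulomb gauge of $u$, and hence, by Sobolev embedding, strongly in $L^2(\Real^3)\cap L^4(\Real^3)$. Plugging this into the pointwise identity and its square yields a uniform bound $\int_{\Real^3}(\abs{\nabla\bar{u}_i}^2+\abs{\nabla\bar{u}_i}^4)\,dx\le C$ depending only on the $L^2$ and $L^4$ norms of $\nabla u$; since $\bar{u}_i$ takes values in the compact manifold $S^3$, $\bar{u}_i$ is bounded in $W^{1,4}_{\loc}(\Real^3)$. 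On each bounded domain I would extract a subsequence with $\nabla\bar{u}_i\wc\nabla\bar{u}$ weakly in $L^2$ and $L^4$. The crucial observation is that, by the identity $\abs{\nabla\bar{u}_i}^2=\frac{1}{4}\abs{\eta_i}^2+\abs{\nabla u_i}^2$ and its square, the $L^2$ and $L^4$ norms of $\nabla\bar{u}_i$ converge, so weak convergence together with norm convergence in the uniformly convex spaces $L^2,L^4$ upgrades to strong convergence of $\bar{u}_i$ in $W^{1,4}_{\loc}(\Real^3)$. Passing to the limit in $\Pi\circ\bar{u}_i=u_i$ shows $\Pi\circ\bar{u}=u$, so $\bar{u}$ is a genuine Hopf lift of $u$, and Fatou gives $\int_{\Real^3}\abs{\nabla\bar{u}}^4\,dx\le C$.

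Finally, I would integrate the wedge identity over an arbitrary bounded domain $\Omega$ and let $i\to\infty$; the integrand is cubic in first derivatives of $\bar{u}_i,\eta_i,u_i$, which the strong $L^4_{\loc}$ convergences control, so $\int_\Omega\bar{u}^*(2\alpha\wedge\Pi^*\omega_{S^2})=\int_\Omega\eta\wedge u^*\omega_{S^2}$, and since $\Omega$ is arbitrary this holds over all of $\Real^3$. For integrality, each $Q(u_i)=\frac{1}{16\pi^2}\int_{\Real^3}\eta_i\wedge u_i^*\omega_{S^2}$ is an integer because $u_i$ is $C^1$ and constant near infinity, and the same computation as in Corollary \ref{integerdegree} gives $Q(u_i)\to Q(u)$, so $Q(u)\in\mathbb Z$. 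I expect the main obstacle to be exactly the strong $W^{1,4}_{\loc}$ convergence of the Coulomb lifts: mere weak convergence does not suffice to pass to the limit in the cubic expression $\bar{u}^*(2\alpha\wedge\Pi^*\omega_{S^2})$, and it is the algebraic identity $\abs{\nabla\bar{u}_i}^2=\frac{1}{4}\abs{\eta_i}^2+\abs{\nabla u_i}^2$ --- which turns weak convergence into strong convergence via convergence of norms --- that makes the limiting argument go through.
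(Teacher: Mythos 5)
Your proposal is correct and follows essentially the same route as the paper: approximate by the smooth maps of Theorem \ref{thm:keylemma}, take Coulomb gauges and Coulomb lifts, use the identity $\abs{\nabla\bar{u}_i}^2=\frac{1}{4}\abs{\eta_i}^2+\abs{\nabla u_i}^2$ to upgrade weak to strong $W^{1,4}_{\loc}$ convergence, and pass to the limit in the wedge identity over bounded domains. The integrality via the argument of Corollary \ref{integerdegree} also matches the paper.
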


\begin{rem}\label{rem:integerdegree}
    The lift given in the above theorem is a special one. In our later proof, we will need the fact that for any map
    $\bar{u}:\Real^3\to S^3$ with finite $\int_{\Real^3}\abs{\nabla \bar{u}}^4 dx$,
    \begin{equation*}
        \frac{1}{16\pi^2}\int_{\Real^3} \bar{u}^*(2\alpha\wedge \Pi^*\omega_{S^2})
    \end{equation*}
    is an integer. One can give a proof of this by noting that the fact is true for a smooth map which is constant near infinity and Remark \ref{rem:s3}.
\end{rem}

We can now use Hopf lift to prove the cubic decomposition lemma.
It is an adapted version of Lemma 6.1 in Lin and Yang \cite{LY1}
(see also \cite {Hang}). One can see from the statement and the
proof of the following lemma the advantage of introducing the
space $M$. Notice that for $u\in M$ we only use the fact that $\int_{\Real^3}\abs{\nabla u}^4dx$ is finite, while estimates will depend only on Faddeev energy.

For the statement of the lemma we need some notations used in Section 6 of \cite{LY1}. Let $Q(R)$ be a cube of side
length $R>0$. Let $\{Q_i(R)\}_{i=1}^\infty$ be a cubic decomposition of $\Real^3$. That is,
\begin{equation*}
  \Real^3=\cup_{i=1}^\infty Q_i(R),
\end{equation*}
where the interiors of the cubes $Q_i(R)$ are mutually disjoint. For all $a\in \Real^3$, let
\begin{equation*}
  Q_i(R,a)=a+Q_i(R)
\end{equation*}
denote a translation of $Q_i(R)$ by $a$. We write
\begin{equation*}
  \Sigma_R=\cup_{i=1}^\infty \partial Q_i(R)
\end{equation*}
and
\begin{equation*}
  \Sigma_R(a)=a+\Sigma_R=\cup_{i=1}^\infty \partial Q_i(R,a)
\end{equation*}
for the unions of 2-dimensional faces of $Q_i(R)$'s and $Q_i(R,a)$'s.
    \begin{lem}\label{lem:cube}
      Suppose that $u\in M_d$ and $E_F(u)=C_1$. Let $\varepsilon$ be a small constant such that
      \begin{equation*}
          \int_{\Real^3}\varepsilon \abs{\nabla u}^4 dx\leq 1.
      \end{equation*}
      For any $\delta_0>0$, there are some constant $C$ depending only on $C_1$, a cubic decomposition $\{Q_i(R_0,a_0)\}$ for some large $R_0$ and a point $a_0\in \Real^3$ with $\abs{a_0}\leq R_0/4$ such that
      \begin{equation*}
          \int_{\Sigma_{R_0}(a_0)} (\abs{\nabla u}^2+\abs{\eta}^2+ \abs{\eta}^4) +\varepsilon \abs{\nabla u}^4 dx\leq \frac{C}{R_0} \leq \delta_0^2 << 1.
      \end{equation*}
     where $\eta$ is the Coulomb gauge of $u$. Moreover, for each $Q_i$, there is an integer $k_i$ such that
      \begin{eqnarray*}
        \abs{d-\sum_{i=1}^\infty k_i} &\leq & \sum_{i=1}^{\infty}\abs{\frac{1}{16\pi^2}\int_{Q_i}\eta(u)\wedge u^*\omega_{S^2}-k_i} \\
        &\leq& C \int_{\Sigma_{R_0}(a_0)} \abs{\nabla u}^2 +\abs{\eta}^4 +\varepsilon \abs{\nabla u}^4 d\sigma \\
        &\leq& \frac{C}{R_0}\leq \delta_0^2<<1.
      \end{eqnarray*}
    \end{lem}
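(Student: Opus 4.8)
The plan is to treat the two claims separately. For the first, set $f=|\nabla u|^2+|\eta|^2+|\eta|^4+\varepsilon|\nabla u|^4$. One has $\int_{\Real^3}f<\infty$ with a bound $C=C(C_1)$: indeed $\int|\nabla u|^2\le E_F(u)=C_1$, $\int\varepsilon|\nabla u|^4\le 1$ by hypothesis, and $\int(|\eta|^2+|\eta|^4)\le C(C_1)$ follows from the elliptic estimates for the Coulomb gauge $\eta=\delta(-\tfrac1{4\pi|x|}\star u^*\omega_{S^2})$ (so $\|\nabla\eta\|_{L^2}\le C\|u^*\omega_{S^2}\|_{L^2}\le C\sqrt{C_1}$) together with the $|x|^{-2}$ decay of the Newtonian potential at infinity and $W^{1,2}\hookrightarrow L^6$ near the origin. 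I would then average over translations: for the grid $\{a+Q_i(R)\}$, integrating $a\mapsto\int_{\Sigma_R(a)}f\,d\sigma$ over one period cell $[0,R]^3$ gives $\tfrac3R\int_{\Real^3}f\,dx$, since each of the three families of coordinate hyperplanes forming $\Sigma_R$ sweeps $\Real^3$ once per period. By Chebyshev and periodicity the set of $a$ with $\int_{\Sigma_R(a)}f\,d\sigma\le C/R$ has density close to $1$, hence meets $\{|a|\le R/4\}$; choosing such an $a_0$ and $R_0$ large so that $C/R_0\le\delta_0^2$ gives the first displayed inequality. Set $Q_i:=a_0+Q_i(R_0)$.

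For the degree decomposition, fix $i$. Since $u^*\omega_{S^2}=d\eta$ is globally exact, $\int_{\partial Q_i}u^*\omega_{S^2}=0$, so $\deg(u|_{\partial Q_i})=0$; pick a point $p_i\in S^2$ avoided by $u|_{\partial Q_i}$ and extend $u|_{\partial Q_i}$ smoothly over a ball $B_i$ glued to $Q_i$ along $\partial Q_i$, with values in the contractible set $S^2\setminus B_{\rho_i}(p_i)$. This gives $\hat u_i\colon S^3=Q_i\cup B_i\to S^2$ in $W^{1,4}$; I set $k_i:=Q(\hat u_i)\in\Z$ (independent of the chosen extension, as any two are homotopic rel $\partial Q_i$). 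Using $Q(\hat u_i)=\tfrac1{16\pi^2}\int_{S^3}\hat\eta_i\wedge\hat u_i^*\omega_{S^2}$ with a primitive $\hat\eta_i$ of $\hat u_i^*\omega_{S^2}$ equal to $\eta$ on $Q_i$ (extended over $B_i$ with the matching trace, which exists since $H^1(S^2)=H^2(B_i)=0$), and writing $I_i:=\tfrac1{16\pi^2}\int_{Q_i}\eta\wedge u^*\omega_{S^2}$, one gets $16\pi^2(k_i-I_i)=\int_{B_i}\hat\eta_i\wedge\hat u_i^*\omega_{S^2}$. On $B_i$ I would take the primitive in the special form $\hat\eta_i=\hat u_i^*\theta_i-d\tilde h_i$, where $\theta_i$ is a primitive of $\omega_{S^2}$ on $S^2\setminus B_{\rho_i}(p_i)$ and $\tilde h_i$ extends the function $h_i$ on $\partial Q_i$ solving $dh_i=(u|_{\partial Q_i})^*\theta_i-\eta|_{\partial Q_i}$. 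Since $\theta_i\wedge\omega_{S^2}=0$ on the $2$-manifold $S^2$ and $d(\hat u_i^*\omega_{S^2})=0$, the integrand collapses to $-d(\tilde h_i\,\hat u_i^*\omega_{S^2})$, so by Stokes $\int_{B_i}\hat\eta_i\wedge\hat u_i^*\omega_{S^2}=-\int_{\partial Q_i}h_i\,u^*\omega_{S^2}$, and a further integration by parts on the closed surface $\partial Q_i$, using $\eta\wedge\eta=0$, turns this into $\int_{\partial Q_i}(u|_{\partial Q_i})^*\theta_i\wedge\eta$; hence $|k_i-I_i|\le C\int_{\partial Q_i}|(u|_{\partial Q_i})^*\theta_i|\,|\eta|\,d\sigma$.

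Since $\int_{\Real^3}|\eta|\,|u^*\omega_{S^2}|\le\|\eta\|_{L^2}\|u^*\omega_{S^2}\|_{L^2}<\infty$, the series $\sum_iI_i$ converges absolutely to $Q(u)=d$, so $k_i=0$ for all but finitely many $i$, $\sum_ik_i$ converges, and $|d-\sum_ik_i|\le\sum_i|k_i-I_i|$. It then remains to bound $\sum_i\int_{\partial Q_i}|(u|_{\partial Q_i})^*\theta_i|\,|\eta|\,d\sigma$ by $C\int_{\Sigma_{R_0}(a_0)}(|\nabla u|^2+|\eta|^4+\varepsilon|\nabla u|^4)\,d\sigma\le C/R_0$: for cubes on which $u|_{\partial Q_i}$ stays a definite distance from the chosen point $p_i$ this is immediate, since then $|(u|_{\partial Q_i})^*\theta_i|\le C|\nabla u|$ and $|\nabla u|\,|\eta|\le|\nabla u|^2+|\eta|^2$, and one sums using $\sum_i\int_{\partial Q_i}(\,\cdot\,)=\int_{\Sigma_{R_0}(a_0)}(\,\cdot\,)$ from Step 1.

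The slicing step is routine. The genuine difficulty — and the step I expect to be the main obstacle — is making this last estimate uniform over all cubes: controlling $\|\theta_i\|_{C^0}$, i.e.\ choosing $p_i$ so that $u|_{\partial Q_i}$ misses it by a definite amount, and controlling the part of $\int_{\partial Q_i}|(u|_{\partial Q_i})^*\theta_i|\,|\eta|\,d\sigma$ coming from where $u$ is near $p_i$. This is where the smallness of $\int_{\Sigma_{R_0}(a_0)}(|\nabla u|^2+\varepsilon|\nabla u|^4)$ — which forces each $\int_{\partial Q_i}|\nabla u|^2$ and $\int_{\partial Q_i}\varepsilon|\nabla u|^4$ below fixed thresholds — together with the continuity of $u|_{\partial Q_i}$ (a consequence of $\int_{\Real^3}|\nabla u|^4<\infty$) has to be used, and it must be done carefully so as not to lose powers of $R_0$ or of $\varepsilon^{-1}$; once a bound of the stated form is in hand, Step 1 closes the argument.
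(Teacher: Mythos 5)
Your slicing argument for the first display is the same as the paper's (Fubini plus the mean value theorem over a period cell of translations), so that part is fine. The real divergence is in how you approximate $\frac{1}{16\pi^2}\int_{Q_i}\eta\wedge u^*\omega_{S^2}$ by an integer, and there your route has a genuine gap, which you yourself flag but do not close. Your error formula $\abs{k_i-I_i}\le C\int_{\partial Q_i}\abs{(u|_{\partial Q_i})^*\theta_i}\,\abs{\eta}\,d\sigma$ carries the constant $C=\norm{\theta_i}_{C^0(S^2\setminus B_{\rho_i}(p_i))}\sim \rho_i^{-1}$, so the whole estimate stands or falls with a lower bound on $\rho_i$ that is uniform in $i$ and depends only on $C_1$. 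But the slicing only gives $\int_{\partial Q_i}\abs{\nabla u}^2\le\delta_0^2$ and $\int_{\partial Q_i}\abs{\nabla u}^4\le C/(\varepsilon R_0)$; small Dirichlet energy on a two-dimensional slice bounds the \emph{area} of the image of $u|_{\partial Q_i}$, not its $\rho$-density in $S^2$ --- a degree-zero continuous map of arbitrarily small slice energy can still have image that is $\rho$-dense for every $\rho$ (a long thin connected set of negligible area), and the available oscillation bound $\mathrm{osc}(u|_{\partial Q_i})\le C R_0^{1/2}\norm{\nabla u}_{L^4(\partial Q_i)}\le C(R_0/\varepsilon)^{1/4}$ grows with $R_0$ rather than shrinking. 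So there is no way, from the hypotheses as used, to choose $p_i$ and $\rho_i$ uniformly, and the final bound $\sum_i\abs{k_i-I_i}\le C/R_0$ with $C=C(C_1)$ is not obtained. This is not a removable technicality: it is the reason the paper does not argue on $S^2$ at all.

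The paper's proof instead goes through the Hopf lift (Theorem 4.1): it replaces $u$ on $Q_i$ by a lift $\bar u:\Real^3\to S^3$ with $\abs{\nabla\bar u}^2=\abs{\nabla u}^2+\frac14\abs{\eta}^2$ and $\int_{Q_i}\eta\wedge u^*\omega_{S^2}=\int_{Q_i}\bar u^*(2\alpha\wedge\Pi^*\omega_{S^2})$, then caps $\bar u$ off outside $Q_i$ by the normalized harmonic extension $V/\abs{V}$ in the doubled cube, using the elliptic estimates of Section 3 and the Poincar\'e inequality to ensure $\abs{\xi}$ is close to $1$ --- an $L^2$ statement about the average, not an $L^\infty$ statement about the image. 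This yields a map $w$ constant near infinity with $\int_{Q_i'\setminus Q_i}\abs{\nabla w}^3\le C\varepsilon_0$, and then $k_i-I_i=\frac{1}{16\pi^2}\int_{\Real^3\setminus Q_i}w^*(2\alpha\wedge\Pi^*\omega_{S^2})$. The point of the lift is exactly what your approach is missing: on $S^3$ the form $\Pi^*\omega_{S^2}$ has the globally defined, \emph{bounded} primitive $2\alpha$, so the degree density is pointwise bounded by $C\abs{\nabla w}^3$ and no singular primitive $\theta_i$, and hence no omitted point, ever appears. Unless you can prove the uniform omitted-ball statement (which I do not believe follows from the available slice control), you should switch to the lift.
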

    \begin{proof}
    Since $E_F(u)<C_1$,
    \begin{equation*}
        \int_{\Real^3} \abs{\nabla u}^2 +\abs{\eta}^2 +\abs{\eta}^4 +\varepsilon \abs{\nabla u}^4dx< C.
    \end{equation*}
    By Fubini's theorem,
    \begin{equation*}
        \int_0^{R/12}da_1 \int_0^{R/12}da_2 \int_0^{R/12}da_3\int_{\Sigma_R(a)} (\abs{\nabla u}^2+\abs{\eta}^2+\abs{\eta}^4+\varepsilon\abs{\nabla u}^4 d\sigma)\leq CR^2.
    \end{equation*}
    The mean value theorem then implies that there is a point $a_0\in \Real^3$ with $\abs{a_0}\leq R/4$ such that
    \begin{equation*}
        \int_{\Sigma_R(a_0)}(\abs{\nabla u}^2 +\abs{\eta}^2 +\abs{\eta}^4 +\varepsilon \abs{\nabla u}^4) d\sigma\leq \frac{C}{R}.
    \end{equation*}
    Here $C$ depends only on $C_1$. Therefore, we can choose $R$ sufficiently large so that the first statement of the lemma is true.
    Please note that $R$ depends only on $C_1$ and $\delta_0$.

Next, we need to estimate the error from $\int_{Q_i} \eta(u)\wedge
u^*\omega$ to an integer. For simplicity, we suppress lower index
$i$ and write $Q'$ for the cube whose center is the same as $Q$
and whose side length is twice as big as $Q$. Set
\begin{equation*}
    \int_{\partial Q} \abs{\nabla u}^2 +|\eta|^2+\abs{\eta}^4 +\varepsilon \abs{\nabla u}^4 d\sigma=\varepsilon_0.
\end{equation*}

Since $u$ has finite Faddeev energy and $L^4$ norm, we can use Theorem \ref{thm:lift}. There is a lift $\bar{u}$ such that
\begin{equation}\label{eqn:aa}
  \int_{Q} \eta\wedge u^*\omega_{S^2} =\int_Q \bar{u}^*(2\alpha\wedge \Pi^*\omega_{S^2}).
\end{equation}
We will now modify $\bar{u}$ outside $Q$.  Let $\xi$ be the average of $\bar{u}$ on $\partial Q$. Since
\begin{equation}\label{eqn:again}
    \abs{\nabla \bar{u}}^2=\abs{\nabla u}^2 +\frac{1}{4}\abs{\eta}^2,
\end{equation}
we know that
\begin{equation*}
  \int_{\partial Q} \abs{\nabla \bar{u}}^2\leq \varepsilon_0
\end{equation*}
and
\begin{equation*}
  \int_{\partial Q}\abs{\nabla \bar{u}}^4 \leq C(\varepsilon).
\end{equation*}
We can now define $V:Q'\setminus Q\to \Real^4$ to be the harmonic functions with boundary value
\begin{equation*}
  V|_{\partial Q}=\bar{u},\quad V|_{\partial Q'}=\frac{\xi}{\abs{\xi}}.
\end{equation*}
Set $w$ to a map from $\Real^3$ to $S^3$ by

(1) $w=\bar{u}$ in $Q$;

(2) $w=\frac{V}{\abs{V}}$ in $Q'\setminus Q$;

(3) $w=\frac{\xi}{\abs{\xi}}$ outside $Q'$.

Similar to the proof of estimate (\ref{eqn:V6}), we can show that
\begin{equation}\label{eqn:bb}
  \int_{Q'\setminus Q}\abs{\nabla w}^3 dx< C\varepsilon_0
\end{equation}
and ($w$ is constant outside $Q'$)
\begin{equation}\label{eqn:bc}
  \int_{\Real^3} \abs{\nabla w}^2+ \abs{\nabla w}^4 dx < +\infty.
\end{equation}
Due to equation (\ref{eqn:bc}) and Remark \ref{rem:integerdegree}, we know
\begin{equation}\label{eqn:cc}
    \frac{1}{16\pi^2}\int_{\Real^3} w^*(2\alpha\wedge \Pi^*\omega_{S^2})=k\in \mathbb Z.
\end{equation}
By  (\ref{eqn:bb}) and (\ref{eqn:cc}),
\begin{eqnarray*}
    \abs{\frac{1}{16\pi^2}\int_Q\eta\wedge u^*\omega_{S^2}-k}&=& \abs{\frac{1}{16\pi^2}\int_{\Real^3\setminus Q}w^*(2\alpha\wedge \Pi^*\omega_{S^2})} \\
    &\leq& C\varepsilon_0.
\end{eqnarray*}
\end{proof}

As an application of Lemma \ref{lem:cube}, we show that the weak limit of a sequence of maps in $M_d$ has also an integer degree.
Notice that such a weak limit may not be in $M$.
\begin{thm} \label{thm:degree}
    Let $u_j$ be a sequence in $M_d$ with bounded $E_F(u_j)$. Assume that $u_j$ converges weakly to $u$
    in the sense that $u_j$ converges strongly in $L^2$ to $u$ and $\nabla u_j$ converges weakly in $L^2$ to $\nabla u$ and $\partial_k u_j\times \partial_l u_j$ converges weakly in $L^2$ to $\partial_k u\times \partial_l u$. Then
    \begin{equation*}
        \frac{1}{16\pi^2}\int_{\Real^3} \eta\wedge u^*\omega_{S^2} \in \mathbb Z
    \end{equation*}
    where $\eta=\delta (-\frac{1}{4\pi\abs{x}}\star u^*\omega_{S^2})$.
\end{thm}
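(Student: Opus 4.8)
The plan is to deduce the integrality of the degree of the weak limit $u$ from the cubic decomposition Lemma~\ref{lem:cube}, applied to the maps $u_j$ with a fixed (small) value of $\varepsilon$, combined with a diagonal/limiting argument. The overall strategy mirrors the proof of Corollary~\ref{integerdegree} and Theorem~\ref{thm:lift}, but now the maps $u_j$ lie in $M_d$ while their weak limit $u$ need not lie in $M$, so the argument must be localized to cubes on whose boundaries the energies stay controlled.

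First I would fix a large radius $R_0$ and, applying Fubini's theorem to the uniformly bounded integrals $\int_{\Real^3}(\abs{\nabla u_j}^2+\abs{\eta_j}^2+\abs{\eta_j}^4+\varepsilon\abs{\nabla u_j}^4)\,dx$ simultaneously for all $j$ in a subsequence, select a single translation parameter $a_0$ with $\abs{a_0}\le R_0/4$ and a subsequence (still denoted $u_j$) so that the boundary integrals $\int_{\Sigma_{R_0}(a_0)}(\abs{\nabla u_j}^2+\abs{\eta_j}^2+\abs{\eta_j}^4+\varepsilon\abs{\nabla u_j}^4)\,d\sigma$ are uniformly $\le C/R_0$. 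Here $\eta_j$ is the Coulomb gauge of $u_j$; since $\nabla u_j\to\nabla u$ weakly in $L^2$ and the wedge products $\partial_k u_j\times\partial_l u_j$ (equivalently $u_j^*\omega_{S^2}$) converge weakly in $L^2$, the Coulomb gauges $\eta_j$ converge to $\eta=\delta(-\tfrac{1}{4\pi\abs{x}}\star u^*\omega_{S^2})$ weakly in $W^{1,2}_{\loc}$, hence strongly in $L^2_{\loc}$ and (by Sobolev) in $L^4_{\loc}$. Next, for this fixed cubic decomposition $\{Q_i(R_0,a_0)\}$, Lemma~\ref{lem:cube} gives integers $k_i^{(j)}$ with
\begin{equation*}
  \Bigl|\frac{1}{16\pi^2}\int_{Q_i}\eta_j\wedge u_j^*\omega_{S^2}-k_i^{(j)}\Bigr|\le C\int_{\partial Q_i}\bigl(\abs{\nabla u_j}^2+\abs{\eta_j}^4+\varepsilon\abs{\nabla u_j}^4\bigr)\,d\sigma,
\end{equation*}
and summing over $i$ yields $\bigl|d-\sum_i k_i^{(j)}\bigr|\le C/R_0$. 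Choosing $R_0$ large enough that $C/R_0<1/2$ forces $\sum_i k_i^{(j)}=d$ for every $j$, and in fact only finitely many cubes, say $Q_1,\dots,Q_N$ (those meeting $B_{R_0}$ plus a controlled number more, by the smallness of the tail), carry a nonzero $k_i^{(j)}$; moreover $\sum_{i>N}\bigl|\frac{1}{16\pi^2}\int_{Q_i}\eta_j\wedge u_j^*\omega_{S^2}\bigr|$ is uniformly small in $j$.

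Then I would pass to the limit on each of the finitely many cubes $Q_i$, $i\le N$. Since $\eta_j\to\eta$ strongly in $L^2(Q_i)$ and $u_j^*\omega_{S^2}\to u^*\omega_{S^2}$ weakly in $L^2(Q_i)$ (this is exactly the hypothesis $\partial_k u_j\times\partial_l u_j\wc\partial_k u\times\partial_l u$), the products converge:
\begin{equation*}
  \int_{Q_i}\eta_j\wedge u_j^*\omega_{S^2}\ \To\ \int_{Q_i}\eta\wedge u^*\omega_{S^2}.
\end{equation*}
For each fixed $i\le N$ the integers $k_i^{(j)}$ are bounded, hence eventually constant along a further subsequence, equal to some $k_i\in\mathbb Z$; and by the uniform smallness of the high-index tail, $\sum_i\frac{1}{16\pi^2}\int_{Q_i}\eta\wedge u^*\omega_{S^2}=\lim_j\sum_i\frac{1}{16\pi^2}\int_{Q_i}\eta_j\wedge u_j^*\omega_{S^2}$ is within $C/R_0<1/2$ of the integer $\sum_i k_i$. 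But $\sum_i\frac{1}{16\pi^2}\int_{Q_i}\eta\wedge u^*\omega_{S^2}=\frac{1}{16\pi^2}\int_{\Real^3}\eta\wedge u^*\omega_{S^2}$, which is therefore within $1/2$ of an integer; letting $R_0\to\infty$ (repeating the construction) shows it is arbitrarily close to integers, hence equal to one. This is the desired conclusion.

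The main obstacle I anticipate is the bookkeeping needed to make the two limits — the weak limit $j\to\infty$ and the cube-size limit $R_0\to\infty$ — interact cleanly: one must control the tail $\sum_{i>N}$ uniformly in $j$ (using that $\int_{\Real^3\setminus B_{R_0}}(\abs{\nabla u_j}^2+\abs{\eta_j}^2+\abs{\eta_j}^4)$ is small uniformly in $j$, which requires a uniform tightness/no-energy-escaping-to-infinity statement for the sequence, or at least a uniform bound sufficient to absorb it into the $C/R_0$ error), and one must ensure the single choice of $a_0$ works for all $j$ in the subsequence, which is where applying Fubini to the sum (or supremum) over $j$ of the integrals is essential. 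A secondary technical point is justifying $\eta_j\to\eta$ strongly in $L^2_{\loc}$ and $L^4_{\loc}$: this follows from the $W^{1,2}$-boundedness of $\eta_j$ (via the Calderón–Zygmund estimate $\norm{\eta_j}_{W^{1,2}}\le C\norm{u_j^*\omega_{S^2}}_{L^2}$ built into the definition $\eta_j=\delta(-\tfrac{1}{4\pi\abs{x}}\star u_j^*\omega_{S^2})$) together with Rellich, once one checks the weak-$L^2$ limit of $u_j^*\omega_{S^2}$ is indeed $u^*\omega_{S^2}$ — which is the stated hypothesis.
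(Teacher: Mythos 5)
Your overall strategy (cubic decomposition via Lemma \ref{lem:cube}, passing to the limit of $\int_{Q_i}\eta_j\wedge u_j^*\omega_{S^2}$ on each cube, then summing) is the same as the paper's, but your concluding step has a genuine gap. You take the relevant integer to be $\sum_i k_i^{(j)}=d$ and then assert $\sum_i\frac{1}{16\pi^2}\int_{Q_i}\eta\wedge u^*\omega_{S^2}=\lim_j\sum_i\frac{1}{16\pi^2}\int_{Q_i}\eta_j\wedge u_j^*\omega_{S^2}$, which amounts to saying that no topological charge escapes to infinity. That is not a hypothesis of the theorem and can fail: translate a fixed degree-one configuration out to infinity and the weak limit is a constant, so the left side is $0$ while the right side is $1$ (both integers, so the theorem survives, but your limit interchange does not). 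You flag this tail control as ``the main obstacle,'' but it is not a bookkeeping issue --- the uniform-in-$j$ tightness you would need is simply unavailable. The paper sidesteps it by arguing cube by cube: for each fixed $Q_i$ the distance of $\frac{1}{16\pi^2}\int_{Q_i}\eta\wedge u^*\omega_{S^2}$ to its own nearest integer $m_i$ is at most $\liminf_{j}\mathcal{E}_{j,i}$ (these $m_i$ need not agree with your $k_i^{(j)}$, and $\sum_i m_i$ need not equal $d$); only finitely many $m_i$ are nonzero because $\eta\wedge u^*\omega_{S^2}\in L^1(\Real^3)$; and $\sum_i\liminf_{j}\mathcal{E}_{j,i}\leq\liminf_{j}\sum_i\mathcal{E}_{j,i}\leq\varepsilon_0$ by Fatou. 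This shows the total integral is within $\varepsilon_0$ of the integer $\sum_i m_i$ without any tightness, and letting $\varepsilon_0\to 0$ finishes the proof.

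A secondary problem: your ``simultaneous Fubini'' selection of a single translation $a_0$ making the slice integrals $\leq C/R_0$ for every $j$ at once is not justified. Fubini controls the average over $a$ for each fixed $j$ separately, and the good sets of translations for different $j$ need not have a common point with a uniform constant (Fubini does not commute with the supremum over $j$). The paper instead chooses $a_j$ for each $u_j$ and extracts a subsequence with $a_j\to a$ and $Q_{j,i}\to Q_i$; the cube-by-cube limit argument above is then run on these $j$-dependent cubes. You should adopt that device as well.
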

\begin{proof}
    For any small positive number $\varepsilon_0$, we will prove that the difference between
    \begin{equation*}
        \frac{1}{16\pi^2}\int_{\Real^3} \eta\wedge u^*\omega_{S^2}
    \end{equation*}
    with some integer is smaller than $\varepsilon_0$.

    For this $\varepsilon_0$, we can use Lemma \ref{lem:cube} for each $u_j$ to obtain a cubic decomposition for each $u_j$.
    Precisely, there are $a_j\in \Real^3$ and $\abs{a_j}\leq \frac{R_0}{4}$ such that $\Sigma_{R_0}(a_j)$ decompose the space into
    cubes for which we write $Q_{j,i}$. By choosing a subsequence, we conclude that $a_j$ converges to $a$ and each $Q_{j,i}$
     converges to $Q_i$. Let $\mathcal{E}_{j,i}$ denote the difference between
    \begin{equation*}
        \int_{Q_{j,i}} \frac{1}{16\pi^2} \eta_j\wedge u^*_j\omega_{S^2}
    \end{equation*}
    and the nearest integer. We know from Lemma \ref{lem:cube}
    \begin{equation*}
        \sum_{i=1}^\infty \mathcal{E}_{j,i}<\varepsilon_0.
    \end{equation*}

    Now we claim:
    \begin{equation*}
        \int_{Q_{j,i}} \eta_j\wedge u_j^*\omega_{S^2} \to \int_{Q_i} \eta\wedge u^*\omega_{S^2}.
    \end{equation*}
    We know that for each fixed compact domain $\Omega$
    \begin{equation*}
        \int_{\Omega} \eta_j\wedge u_j^*\omega_{S^2} \to \int_{\Omega} \eta\wedge u^*\omega_{S^2}.
    \end{equation*}
    Therefore, to prove the claim, it suffices to show that if $\abs{\Omega}\to 0$, then
    \begin{equation*}
        \int_{\Omega} \eta_j\wedge u_j^*\omega_{S^2}\to 0
    \end{equation*}
    uniformly. This is true due to H\"older inequality and the fact that the $L^2$ norm of $u_j^*\omega$ and $L^4$ norm of $\eta_j$ are bounded uniformly.

    Given the claim, we know that the difference between
    \begin{equation*}
        \frac{1}{16\pi^2}\int_{Q_i} \eta\wedge u^*\omega_{S^2}
    \end{equation*}
    and the nearest integer is no bigger than
    \begin{equation*}
        \liminf_{j\to \infty} \mathcal{E}_{j,i}.
    \end{equation*}
    Since $\mathcal{E}_{j,i}$ are all positive, it is elementary to prove
    \begin{equation*}
        \sum_{i=1}^\infty \liminf_{j\to \infty} \mathcal{E}_{j,i}\leq \varepsilon_0.
    \end{equation*}
    This justifies the statement at the beginning of this proof.
\end{proof}

\section{Minimizing relaxed energy via perturbed functional}
For each $\varepsilon >0$, we define perturbed energy as follows
\begin{equation*}
    E_{F, \varepsilon }(u)=\int_{\Real^3} \abs{\nabla u}^2 +\frac{1}{2}\sum_{1\leq k<j\leq 3}\abs{\partial_k u\times \partial_l u}^2 +\varepsilon\abs{\nabla u}^4 dx.
\end{equation*}

\begin{thm}
    There is an infinite subset $S$ of $\mathbb Z$ such that for each $d\in S$ the minimizing problem of $E_{F,\varepsilon}$ in $X_d$ has a solution.
\end{thm}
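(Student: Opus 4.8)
\emph{Step 1: reduction and elementary properties of the infimum.} The plan is to run the concentration--compactness scheme of Lin--Yang \cite{LY1}, taking advantage of the fact that the perturbation $\varepsilon\abs{\nabla u}^4$ makes all the a priori control uniform along minimizing sequences. Fix $\varepsilon>0$ and set $m(d)=\inf_{u\in X_d}E_{F,\varepsilon}(u)$; since any $u\in X_d$ with $E_{F,\varepsilon}(u)<\infty$ lies in $M_d$, we have $m(d)=\inf_{u\in M_d}E_{F,\varepsilon}(u)$. First I would record: $m(d)<+\infty$ for every $d$ (transplant to $\Real^3$ a smooth $S^3\to S^2$ of Hopf degree $d$, constant outside a ball); $m(-d)=m(d)$; $m(d)\ge C\abs{d}^{3/4}>0$ for $d\ne0$, by Vakulenko--Kapitanski together with $E_{F,\varepsilon}\ge E_F$; and the subadditivity $m(d_1+d_2)\le m(d_1)+m(d_2)$, obtained by choosing near-minimizers which, by Theorem~\ref{thm:keylemma}, may be taken equal to a common constant outside large balls, and then gluing translated copies far apart — the Faddeev energy and the term $\varepsilon\abs{\nabla u}^4$ are exactly additive, while the Hopf degree of the glued map equals $d_1+d_2$ because the only correction is a linking term that tends to $0$ with the separation and $Q$ is integer-valued. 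More generally $m(d)\le\sum_i m(d_i)$ for every decomposition $d=\sum_i d_i$.

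\emph{Step 2: the compactness/splitting dichotomy.} Let $\{u_j\}\subset M_d$ with $E_{F,\varepsilon}(u_j)\to m(d)$. Then $\nabla u_j$ is bounded in $L^2\cap L^4$, the curvatures $\partial_k u_j\times\partial_l u_j$ in $L^2$, and — crucially, using the $L^4$ bound — the Coulomb gauges $\eta_j=\delta(-\tfrac1{4\pi\abs{x}}\star u_j^*\omega_{S^2})$ in $W^{1,2}(\Real^3)$, hence in $L^2\cap L^6$. Applying Lemma~\ref{lem:cube} to each $u_j$ gives, for a fixed large $R_0$ and translations $a_j$ with $\abs{a_j}\le R_0/4$, a grid $\Sigma_{R_0}(a_j)$ on which the energy density integrates to $\le C/R_0$ and on whose cubes the degree is within $\delta_0^2$ of integers $k_{j,i}$ with $\sum_i k_{j,i}=d$. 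Either the charge stays confined — after translating each $u_j$, all cubes with $k_{j,i}\ne0$ lie in a fixed ball and no energy escapes to infinity — in which case, up to a subsequence, $u_j$ converges weakly in the sense required by Theorem~\ref{thm:degree} to some $u$ with $Q(u)\in\mathbb Z$, the cube-wise concentration of the charge together with the absence of escape forces $Q(u)=d$, i.e.\ $u\in M_d$, and weak lower semi-continuity of the three convex integrands together with the weak $L^2$ convergence of the Jacobian terms $\partial_k u_j\times\partial_l u_j$ yields $E_{F,\varepsilon}(u)\le m(d)$, so $m(d)$ is attained; or the charge splits into $k\ge2$ clusters of cubes with nonzero total degrees $d_1,\dots,d_k$, $\sum_i d_i=d$, separated by distances $\to\infty$, and extracting the profile on each cluster and using their asymptotic orthogonality gives
\begin{equation}\label{eqn:plan-split}
    m(d)=\lim_j E_{F,\varepsilon}(u_j)\ \ge\ \sum_{i=1}^k m(d_i).
\end{equation}

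\emph{Step 3: the set $S$ and its infinitude.} Define $S=\{\,d\ne0:\ m(d)<\sum_{i=1}^k m(d_i)\text{ for all }k\ge2,\ d_i\ne0,\ \sum_i d_i=d\,\}$. For $d\in S$ the second alternative in \eqref{eqn:plan-split} is impossible, so every minimizing sequence in $M_d$ is compact and $m(d)$ is attained; it remains to prove that $S$ is infinite. Following \cite{LY1}: from $C\abs{d}^{3/4}\le m(d)\le m(1)\abs{d}$ one gets $m(d)/\abs{d}^{3/4}\to+\infty$, hence for each $N$ the number $\beta_N:=\inf\{\,m(d)/\abs{d}^{3/4}:\abs{d}\ge N\,\}$ is attained at some $d_N$ with $\abs{d_N}\ge N$, and $\beta_N\to\infty$. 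Using the strict subadditivity of $t\mapsto t^{3/4}$ (so $\sum_i\abs{d_i}^{3/4}>\abs{\sum_i d_i}^{3/4}$ whenever $k\ge2$ and all $d_i\ne0$), combined with $m(d_i)\ge\beta_N\abs{d_i}^{3/4}$ for the pieces with $\abs{d_i}\ge N$ and the Vakulenko--Kapitanski bound for the boundedly many pieces with $\abs{d_i}<N$, one checks that $d_N\in S$ once $N$ is large; since $\abs{d_N}\to\infty$ this produces infinitely many elements of $S$ (together with their negatives).

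\emph{Main difficulty.} The heart of the argument is Step~2: converting the cubic decomposition Lemma~\ref{lem:cube} into an honest profile decomposition, and in particular establishing \eqref{eqn:plan-split} with pieces carrying integer Hopf charges that sum to $d$. The obstruction is the nonlocal definition of $Q$ through the Coulomb gauge: one must show that cutting $u_j$ along the low-energy faces $\Sigma_{R_0}(a_j)$, replacing it by a constant off a given cluster, and re-solving for the gauge changes both the energy and the charge by controllably small amounts — and this is exactly where the uniform $L^4$ bound on $\nabla u_j$ (hence on $\eta_j$) produced by the perturbation is indispensable, since it is what allows Theorem~\ref{thm:lift} and Theorem~\ref{thm:degree} to be invoked along the sequence. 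The remaining delicate point, the bookkeeping in Step~3 that rules out splittings off low-degree pieces, is carried out as in \cite{LY1}.
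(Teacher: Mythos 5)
Your Steps 1 and 2 follow the paper's route: the cubic decomposition Lemma~\ref{lem:cube}, the confinement/splitting dichotomy, and the inequality $m(d)\ge\sum_i m(d_i)$ are exactly the content of the paper's Lemma~\ref{lem:split}. The genuine gap is in Step 3, where you prove (or rather fail to prove) that $S$ is infinite. You assert that from $C\abs{d}^{3/4}\le m(d)\le m(1)\abs{d}$ one gets $m(d)/\abs{d}^{3/4}\to+\infty$. This does not follow from those two bounds, and it is in fact false: the decisive ingredient of the Lin--Yang scheme, which you never state, is the \emph{sublinear} energy growth $m(d)\le C\abs{d}^{3/4}$ (the paper's Lemma~\ref{lem:energygrowth}, which holds for the perturbed functional because the Lin--Yang test maps have finite $\int\abs{\nabla u}^4\,dx$ and hence lie in $M_d$); this makes $m(d)/\abs{d}^{3/4}$ \emph{bounded}. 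Consequently your quantities $\beta_N$ do not tend to infinity, and the verification that $d_N\in S$ collapses: for the pieces with $\abs{d_i}<N$ the only available bound is the Vakulenko--Kapitanski one with its universal constant, which may be far smaller than $\beta_N$, so the strict inequality $\sum_i m(d_i)>m(d_N)$ is not forced. Note also that with only the linear upper bound $m(d)\le m(1)\abs{d}$ nothing rules out $m(d)=\abs{d}\,m(1)$ with equality in every splitting, in which case your strict-subadditivity set $S$ could reduce to $\{\pm 1\}$, so no argument ignoring the sublinear growth can possibly work.

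The correct conclusion is the counting argument the paper sketches after Lemma~\ref{lem:split}: (i) any $d\notin S$ decomposes as $d=\sum_{i=1}^l d_i$ with $d_i\in S$, $d_i\ne 0$, and $m(d)\ge\sum_i m(d_i)$; (ii) the uniform lower bound $m(d_i)\ge c_0>0$ then bounds the number of pieces by $l\le m(d)/c_0\le C\abs{d}^{3/4}/c_0$; (iii) since $\sum_i\abs{d_i}\ge\abs{d}$, some piece satisfies $\abs{d_i}\ge\abs{d}/l\ge c_0\abs{d}^{1/4}/C$. Letting $\abs{d}\to\infty$ produces elements of $S$ of arbitrarily large absolute value, so $S$ is infinite. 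You should therefore add the sublinear growth lemma for $E_{F,\varepsilon}$ (checking that the Lin--Yang test configurations have uniformly controlled $\varepsilon\int\abs{\nabla u}^4$) and replace your Step 3 by this counting argument; Steps 1 and 2 can stand essentially as written.
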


The proof follows the same argument as in Lin and Yang \cite{LY1}.
The first ingredient is an energy growth lemma.

\begin{lem}
    \label{lem:energygrowth}
    There is a universal constant $C$ such that
    \begin{equation*}
        E_{F,\varepsilon,d}=\inf \{E_{F,\varepsilon}(u)| u\in M_d\}\leq C\abs{d}^{3/4}.
    \end{equation*}
\end{lem}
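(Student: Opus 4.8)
The plan is to follow the energy growth estimate of Lin and Yang \cite{LY1} (which refines the upper bound of Vakulenko and Kapitanski \cite{VK}) almost verbatim, the one extra point being that the quartic perturbation $\varepsilon\int|\nabla u|^4$ is harmless. The engine is the behaviour under the dilation $u_\lambda(x):=u(x/\lambda)$ on $\Real^3$: differentiating once costs a factor $\lambda^{-1}$ while $dx$ contributes $\lambda^3$, so with $A(u)=\int_{\Real^3}|\nabla u|^2$, $B(u)=\tfrac12\sum_{k<l}\int_{\Real^3}|\partial_k u\times\partial_l u|^2$ and $C(u)=\int_{\Real^3}|\nabla u|^4$ one has $A(u_\lambda)=\lambda A(u)$, $B(u_\lambda)=\lambda^{-1}B(u)$ and $C(u_\lambda)=\lambda^{-1}C(u)$, whence
\begin{equation*}
E_{F,\varepsilon}(u_\lambda)=\lambda\,A(u)+\lambda^{-1}\bigl(B(u)+\varepsilon\,C(u)\bigr),
\end{equation*}
so that $\min_{\lambda>0}E_{F,\varepsilon}(u_\lambda)=2\sqrt{A(u)(B(u)+\varepsilon C(u))}$. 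The perturbation therefore scales exactly like the quartic Faddeev term; since we only ever need $\varepsilon\le1$ (one eventually lets $\varepsilon\to0$), the optimally rescaled perturbed energy is at most $2\sqrt{A(u)(B(u)+C(u))}$, a bounded multiple of the unperturbed one. In particular, fixing once and for all a smooth degree-$\pm1$ model $\psi$ constant outside a ball, $C_0:=\sup_{0<\varepsilon\le1}E_{F,\varepsilon,\pm1}\le 2\sqrt{A(\psi)(B(\psi)+C(\psi))}<\infty$ (and the $d=0$ case of the lemma is trivial, $E_{F,\varepsilon,0}=0$).

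Next I would import the Lin--Yang germ configurations: for degrees $d\ne 0$ ranging over a cofinal set $S_0$ with consecutive gaps $O(\sqrt{|d|})$ (possibly all of $\mathbb Z$), there is a smooth $\psi_d:\Real^3\to S^2$, constant outside a ball, of Hopf degree $d$, with $A(\psi_d)B(\psi_d)\le C|d|^{3/2}$ --- this is exactly what gives $\inf_{\lambda>0}E_F(\psi_d(\cdot/\lambda))=2\sqrt{A(\psi_d)B(\psi_d)}\le C|d|^{3/4}$ in \cite{LY1} --- and, from the explicit shape of the building blocks used there, also $C(\psi_d)\le C\,B(\psi_d)$, i.e. the quartic Dirichlet integral is comparable to the quartic Faddeev term. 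Taking $u_d:=\psi_d(\cdot/\lambda_d)$ with $\lambda_d$ minimizing $\lambda A(\psi_d)+\lambda^{-1}(B(\psi_d)+\varepsilon C(\psi_d))$, and noting $u_d\in M$, so that $Q(u_d)=Q(\psi_d)=d\in\mathbb Z$ by Corollary \ref{integerdegree} and the scale invariance of $Q$, i.e. $u_d\in M_d$, we get for $\varepsilon\le1$
\begin{equation*}
E_{F,\varepsilon,d}\le E_{F,\varepsilon}(u_d)=2\sqrt{A(\psi_d)\bigl(B(\psi_d)+\varepsilon C(\psi_d)\bigr)}\le C'\sqrt{A(\psi_d)B(\psi_d)}\le C''|d|^{3/4}.
\end{equation*}

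The remaining degrees are handled by gluing. Given $d\ne 0$, pick the largest $d_0\in S_0$ with $d_0 d\ge 0$ and $|d_0|\le|d|$; then $r:=|d|-|d_0|\le c\sqrt{|d|}$ for an absolute constant $c$ (and $r=0$ if $S_0=\mathbb Z$). Adjoin to $u_{d_0}$ exactly $r$ disjoint translated copies of $\psi$ of the sign of $d$, all pieces agreeing with a common constant near and outside their balls. The resulting map is smooth and constant at infinity, hence lies in $M$; its Hopf degree is $d$, since the Hopf invariant is additive over disjoint supports --- by linearity of the convolution in \eqref{def:eta} the gauge of the union is the sum of the pieces' gauges, on each ball the other pieces' gauges are closed (hence exact there) and that piece's pulled-back area form vanishes near its boundary, so all cross terms in $\int\eta\wedge u^*\omega_{S^2}$ disappear --- and the three integrals, being of local densities, are exactly additive over the pieces. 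Hence
\begin{equation*}
E_{F,\varepsilon,d}\le E_{F,\varepsilon,d_0}+r\,C_0\le C''|d_0|^{3/4}+c\,C_0\sqrt{|d|}\le C'''|d|^{3/4},
\end{equation*}
with $C'''$ independent of $\varepsilon\le1$ and of $d$; taking the infimum over $M_d$ finishes the proof.

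The genuinely substantial input is the sublinear exponent $3/4$ itself, which is exactly the Lin--Yang/Vakulenko--Kapitanski construction and which I would simply cite. On the present side there is no real difficulty: one only has to record that the quartic Dirichlet integral of the model pieces is comparable to the quartic Faddeev term (they have the same $\lambda^{-1}$ dilation scaling), and that the glued competitors lie in $M_d$ with the intended integer degree, which is Corollary \ref{integerdegree} together with the additivity remark above.
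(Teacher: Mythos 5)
Your proposal is correct and is essentially the paper's argument: the paper's entire proof is the single remark that Lemma 5.1 of Lin--Yang applies verbatim because their test maps lie in $M_d$, and you have simply filled in the details of that construction (the dilation computation, the germ configurations with $A\cdot B\lesssim |d|^{3/2}$, and the gluing of unit hopfions for the remaining degrees). Your extra observation that $\varepsilon\int|\nabla u|^4$ scales under dilation exactly like the quartic Faddeev term, so that the optimal rescaling yields $2\sqrt{A(B+\varepsilon C)}$ and hence a bound uniform in $\varepsilon\le 1$ provided $C(\psi_d)\lesssim B(\psi_d)$, is a point the paper leaves implicit (it only records finiteness of $\int|\nabla\psi_d|^4$), and is in fact what one needs to make the stated constant genuinely independent of $d$ and $\varepsilon$.
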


The lemma follows from the same proof as Lemma 5.1 in \cite{LY1}, since the test map constructed in their proof lies in $M_d$.

The second ingredient is an energy splitting inequality.

\begin{lem}
    \label{lem:split}
    If $d\notin S$, then we can find $l(>1)$ integers $d_1,\cdots,d_l\in S$, whose absolute values are bounded by $\abs{d}$, such that
    \begin{equation*}
        d=d_1+\cdots+ d_l
    \end{equation*}
    and
    \begin{equation*}
        E_{F,\varepsilon,d}\geq E_{F,\varepsilon,d_1}+\cdots +E_{F,\varepsilon,d_l}.
    \end{equation*}
\end{lem}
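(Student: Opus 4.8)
The plan is to extract a first nontrivial splitting of $d$ from the cubic decomposition Lemma~\ref{lem:cube}, applied to a minimizing sequence for $E_{F,\varepsilon,d}$, and then to iterate the construction finitely many times until every piece lies in $S$; the iteration will terminate thanks to the Vakulenko--Kapitanski lower bound \cite{VK} together with the growth estimate of Lemma~\ref{lem:energygrowth}. This is the scheme of Lin--Yang \cite{LY1}, and since Lemma~\ref{lem:cube} is already stated with the extra term $\varepsilon\abs{\nabla u}^4$ present, the adaptation to the perturbed energy $E_{F,\varepsilon}$ amounts mostly to bookkeeping.

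First I would fix a minimizing sequence $\{u_j\}\subset M_d$ with $E_{F,\varepsilon}(u_j)\to E_{F,\varepsilon,d}$ and apply Lemma~\ref{lem:cube} to $u_j$ with $\delta_0=1/j$, obtaining a cubic decomposition $\{Q_{j,i}\}$ of $\Real^3$ and integers $k_{j,i}$ with
\begin{equation*}
\sum_i\Bigl|\tfrac{1}{16\pi^2}\int_{Q_{j,i}}\eta_j\wedge u_j^*\omega_{S^2}-k_{j,i}\Bigr|\le\tfrac1{j^2},\qquad \int_{\Sigma_{R_j}(a_j)}\bigl(\abs{\nabla u_j}^2+\abs{\eta_j}^2+\abs{\eta_j}^4+\varepsilon\abs{\nabla u_j}^4\bigr)\,d\sigma\le\tfrac1{j^2}.
\end{equation*}
Since the total Hopf charge equals $d$ and the $k_{j,i}$ are integers, the first inequality forces $\sum_i k_{j,i}=d$. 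On each cube $Q_{j,i}$ I would run the modification used inside the proof of Lemma~\ref{lem:cube}: pass to the Coulomb lift $\bar u_j$, replace it on the doubled cube $Q_{j,i}'$ by the harmonic extension normalized to $S^3$, and compose with the Hopf map $\Pi$. This produces $v_{j,i}\in M_{k_{j,i}}$, constant outside $Q_{j,i}'$, with
\begin{equation*}
E_{F,\varepsilon}(v_{j,i})\le\int_{Q_{j,i}}\Bigl(\abs{\nabla u_j}^2+\tfrac12{\textstyle\sum_{1\le k<l\le3}}\abs{\partial_k u_j\times\partial_l u_j}^2+\varepsilon\abs{\nabla u_j}^4\Bigr)dx+C\int_{\partial Q_{j,i}}\bigl(\abs{\nabla u_j}^2+\abs{\eta_j}^2+\abs{\eta_j}^4+\varepsilon\abs{\nabla u_j}^4\bigr)d\sigma .
\end{equation*}
Summing over $i$, the main terms add to at most $E_{F,\varepsilon}(u_j)$ because the cubes are disjoint, while the error terms add to the boundary energy on $\Sigma_{R_j}(a_j)$, which is $\le Cj^{-2}$; hence $\sum_i E_{F,\varepsilon,k_{j,i}}\le E_{F,\varepsilon}(u_j)+Cj^{-2}$.

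Next I would pass to the limit. By Lemma~\ref{lem:energygrowth}, $E_{F,\varepsilon,d}\le C\abs d^{3/4}$, while \cite{VK} gives $E_{F,\varepsilon,m}\ge E_F\ge c\abs m^{3/4}\ge c>0$ for every nonzero integer $m$; so for each $j$ only finitely many $k_{j,i}$ are nonzero, their number is bounded uniformly in $j$, and their absolute values stay controlled in terms of $\abs d$ (as in \cite{LY1}). Relabelling the nonzero ones as $d_{j,1},\dots,d_{j,l_j}$, only finitely many integer tuples can occur, so along a subsequence $l_j\equiv l$ and $d_{j,m}\equiv d_m$, and passing to the limit gives $d=d_1+\cdots+d_l$ with $\abs{d_m}\le\abs d$ and $E_{F,\varepsilon,d}\ge\sum_m E_{F,\varepsilon,d_m}$. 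To secure $l>1$ I would argue by contradiction: if every minimizing sequence only ever produced $l=1$, then the whole degree localizes in a single cube and the standard concentration--compactness argument of \cite{LY1} (using the Hopf-lift machinery of Section~4 and Theorem~\ref{thm:degree}) shows $E_{F,\varepsilon,d}$ is attained, contradicting $d\notin S$. Finally, if some $d_m\notin S$, I would apply the same construction to it: this strictly increases the number of pieces while keeping $\sum E_{F,\varepsilon,(\cdot)}\le E_{F,\varepsilon,d}$, and since each piece contributes at least $c$, the process stops after at most $E_{F,\varepsilon,d}/c$ steps, at which point every piece belongs to $S$; the properties $\sum d_i=d$, $l>1$, and $\abs{d_i}\le\abs d$ are preserved at each step.

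The main obstacle I expect is the cube-by-cube construction of the competitors $v_{j,i}$ together with the degree bookkeeping: that $\sum_i k_{j,i}=d$ exactly, that only finitely many $k_{j,i}$ are nonzero, and that the energy of the doubled-cube extension is dominated by the boundary energy of $u_j$ on $\partial Q_{j,i}$ despite the mutual overlap of the doubled cubes $Q_{j,i}'$. Establishing $l>1$ — i.e. that a genuinely nontrivial splitting exists as soon as $d\notin S$ — is the other delicate point; it is essentially the concentration--compactness dichotomy, and verifying it for $E_{F,\varepsilon}$ is where the results of Sections~3 and~4 are needed.
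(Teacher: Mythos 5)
Your overall scheme (cubic decomposition, finitely many degree\hbox{-}carrying cubes, iterate until all pieces lie in $S$, terminate via the Vakulenko--Kapitanski lower bound) matches the paper's, and your fallback argument for $l>1$ is the right one. But the central step --- the energy splitting inequality --- is obtained in the paper by a route you do not follow, and the route you propose instead has a genuine gap. The paper never builds a competitor map on each cube. It groups the degree-carrying cubes $Q_i^1,\dots,Q_i^m$ into clusters $S_1,\dots,S_l$ according to whether their mutual distances stay bounded as $i\to\infty$, translates $u_i$ by the cluster centers, extracts weak limits $v_s$ with $Q(v_s)=d_s=\sum_{t\in S_s}k(Q_i^t)$, and then gets
$E_{F,\varepsilon,d}\geq\sum_s E_{F,\varepsilon,d_s}$ from the disjointness of the balls $B_R(x_{i,s})$ for large $i$ together with weak lower semicontinuity of $E_{F,\varepsilon}$ on each ball. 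No gluing is performed, so no transition-region energy ever has to be estimated.

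Your version replaces this by a cut-and-paste competitor $v_{j,i}$ on each cube, and the inequality you assert,
$E_{F,\varepsilon}(v_{j,i})\le\int_{Q_{j,i}}(\cdots)\,dx+C\int_{\partial Q_{j,i}}(\cdots)\,d\sigma$,
is not available from the machinery of Sections 3--4. The harmonic-extension estimate in the proof of Lemma \ref{lem:cube} controls only $\int_{Q'\setminus Q}\abs{\nabla w}^{3}\le C\varepsilon_0$ (equation (\ref{eqn:bb})); this suffices for the degree bookkeeping because the Hopf integrand $w^*(2\alpha\wedge\Pi^*\omega_{S^2})$ is cubic in $\nabla w$, but the Faddeev terms $\sum\abs{\partial_k w\times\partial_l w}^2$ and $\varepsilon\abs{\nabla w}^4$ are quartic. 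The boundary data only give $\int_{\partial Q}\abs{\nabla u_j}^4\,d\sigma\le\varepsilon_0/\varepsilon$ (since it is $\varepsilon\abs{\nabla u}^4$ that appears in the surface integral of Lemma \ref{lem:cube}), so the quartic energy of the extension on $Q'\setminus Q$ is merely \emph{bounded} in terms of $\varepsilon$, not $o(1)$ as $j\to\infty$; summing over the degree-carrying cubes yields $E_{F,\varepsilon}(u_j)+O(1)$ rather than $E_{F,\varepsilon}(u_j)+Cj^{-2}$, and the splitting inequality is lost. This is also why the clustering matters: two degree-carrying cubes that remain at bounded distance are kept in the same piece $d_s$ precisely so that no cut has to be made between them; your finer cube-by-cube decomposition forces such a cut, which is exactly where the estimate fails. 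To repair your argument you would have to either adopt the paper's concentration--compactness clustering, or prove a genuinely stronger transition estimate controlling the quartic energy of the glued map by the boundary data.
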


For any $d\notin S$, we consider a minimizing sequence $u_i$ of
$E_{F,\varepsilon}$ in $M_d$. Then
\begin{equation*}
    E_F(u_i)\leq E_{F,\varepsilon}(u_i)\leq C.
\end{equation*}

By Theorem \ref{thm:keylemma}, there are integers $k_{i,j}$ and cubes $Q_j(R_0,a_i)$ such that
\begin{eqnarray}\label{eqn:620}
    &&\sum_{j=1}^\infty \abs{\frac{1}{16\pi^2}\int_{Q_j(R_0,a_i)}\eta_i\wedge u_i^*\omega_{S^2}}\\
    \nonumber
    &\geq&\sum_{j=1}^\infty \abs{k_{i,j}}-\sum_{j=1}^\infty \abs{\frac{1}{16\pi^2}\int_{Q_j(R_0,a_i)}\eta_i\wedge u^*_i\omega_{S^2}-k_{i,j}} \\
    \nonumber
    &\geq& m_i-\delta_0^2.
\end{eqnarray}
Here $m_i$ is the number of nonzero $k_{i,j}$'s and $\delta_0$ is some very small positive number.

The left hand side of equation (\ref{eqn:620}) is bounded above by the energy upper bound $C$. Therefore, by taking subsequence,
we may assume $m_i=m$ for all $i$. For convenience, we write $Q_i^1,\cdots,Q_i^m$ for $Q_j(R_0,a_i)$ (with nonzero $k_{i,j}$).

The proof of Lemma  \ref{lem:split} is the same as that  of
Theorem 6.2 in \cite{LY1}. For the completeness, we repeat it
here. We introduce the subsets, $S_1$, $S_2$, \ldots, $S_l$ of the
set $S=\{1\cdots m\}$ inductively. Let $s_1=1$. We choose a
subsequence of $u_i$, still denoted by $u_i$, such that there is a
subset $S_1$ of $S$ with $s_1\in S_1$ and
\begin{equation*}
    \lim_{i\to \infty} \mbox{dist}(Q_i^{s_1},Q_i^k)
\end{equation*}
exists (finite) for each $k\in S_1$. Moreover, for any $k\in
S\setminus S_1$, we have
\begin{equation*}
    \lim_{i\to \infty} \mbox{dist}(Q_i^{s_1}-Q_i^k)=\infty.
\end{equation*}
The existence of such a subsequence of $u_i$ can be checked by
induction.

If $S\setminus S_1$ is nonempty, we set $s_2=\min{k\in S\setminus
S_1}$ and repeat the above procedure to find a subsequence of
$u_i$ such that there is a subset $S_2$ of $S\setminus S_1$ so
that
\begin{equation*}
    \lim_{i\to \infty} \mbox{dist}(Q_i^{s_2},Q_i^k)
\end{equation*}
exists (finite) for each $k\in S_2$ and for any $k\in S\setminus
(S_1\cup S_2)$, we have
\begin{equation*}
    \lim_{i\to \infty} \mbox{dist} (Q_i^{s_2},Q_i^k)=\infty.
\end{equation*}

Continuing this way we can find $l$ numbers
$1=s_1<s_2<\cdots<s_l$, the corresponding subsets $S_1,\cdots,S_l$
of $S$ and a subsequence of $u_i$ (denoted again $u_i$ for
simplicity) with the following properties:
\begin{equation*}
    \begin{array}[]{rcl}
        S&=& \{1,2,\cdots,m\}=S_1\cup S_2\cup\cdots\cup S_l, \\
        S_s\cap S_t&=& \emptyset \mbox{ for } s\ne t, s,t=1,2,\cdots,l; \\
        s_1&=& 1,\\
        S_1&=& \{k\in S| \lim_{i\to \infty}\mbox{dist}(Q_i^{s_1},Q_i^k)<\infty\};\\
        s_2&\in& S\setminus S_1,\\
        S_2&=& \{k\in S| \lim_{i\to \infty}\mbox{dist}(Q_i^{s_2},Q_i^k)<\infty\};\\
        \cdots&&\cdots\cdots\\
        s_l&\in& S\setminus \cup_{s=1}^{l-1} S_s,\\
        S_l&=& \{k\in S| \lim_{i\to \infty}\mbox{dist}(Q_i^{s_l},Q_i^k)<\infty\}.\\
    \end{array}
\end{equation*}
Let $x_{i,1},x_{i,2},\cdots,x_{i,l}$ be the centers of cubes
$Q_i^{s_1},\cdots,Q_i^{s_l}$, denoted thereafter as
$Q_{i,1},Q_{i,2},\cdots,Q_{i,l}$.  Let
$v_{i,s}(x)=u_i(x-x_{i,s}),s=1,2,\cdots,l$ and set
\begin{equation*}
    R_s=\max \{\lim_{i\to \infty} \mbox{dist} (Q_{i,s}, Q_i^t)|t \in S_s\}, \quad s=1,2,\cdots,l.
\end{equation*}
Then $\abs{x_{i,s}-x_{i,t}}\to \infty$ as $i\to \infty$ for $s\ne
t$ and $s,t=1,2,\cdots,l$. By further taking subsequences if
necessary, we see that for $s=1,2,\cdots,l$
\begin{equation*}
    d_s\equiv \lim_{i\to \infty}\sum_{t\in S_s} k(Q_{i}^t)
\end{equation*}
exists and that $v_{i,s}$ converges to $v_s$ weakly  as $i\to
\infty$.

By Lemma \ref{lem:cube}, we obtain
\begin{equation*}
    \sum_{s=1}^l\sum_{t\in S_s} k(Q_i^t)=d
\end{equation*}
for each $i$. Thus $d=\sum_{s=1}^l d_s$ follows.

Due to the lower semicontinuity of $E_{F,\varepsilon}$, we know
\begin{equation*}
    E_{F,\varepsilon}(v_s)<C,\quad s=1,2,\cdots,l.
\end{equation*}
By Theorem \ref{thm:degree}, the Hopf degree of $v_s$ is an integer.
Following Lemma 6.3 in Lin and Yang \cite{LY1}, we conclude that $Q(v_s)=d_s$.

For any large but fixed $R$, when $i$ is sufficiently large,
$B_R(x_{i,s})$ are disjoint for different $s$. Therefore
\begin{equation*}
    E_{F,\varepsilon}(u_i)\geq \sum_{s=1}^{l} E_{F,\varepsilon}(u_i, B_R(x_{i,s})),
\end{equation*}
where $E_{F,\varepsilon}(u,\Omega)$ means the perturbed energy of
$u$ integrating over $\Omega$. Taking $i\to \infty$,
\begin{equation*}
    E_{F,\varepsilon ;d}\geq \sum_{s=1}^l E_{F,\varepsilon}(v_l,B_R(0)).
\end{equation*}
Since $R$ is an arbitrary positive number, we have
\begin{equation*}
    E_{F,\varepsilon ;d}\geq \sum_{s=1}^l E_{F,\varepsilon ;d_s}.
\end{equation*}
If $d_i$ is not in $S$, we can repeat the above argument until we
arrive at a decomposition in which each $d_i$ lies in $S$.

Combining Lemma \ref{lem:energygrowth}, Lemma
\ref{lem:split} and the fact that for all $d\ne 0$,
$E_{F,\varepsilon;d}$ have a uniform lower bound, one can see that
$S$ must be an infinite set.

We have another existence result.
\begin{thm}\label{thm:one}
    For $d=\pm 1$, the minimizing problem of $E_{F,\varepsilon}$ on $M_d$ has a solution when $\varepsilon$ is very small.
\end{thm}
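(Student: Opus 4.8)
The plan is to run a concentration--compactness argument on a minimizing sequence for $E_{F,\varepsilon}$ in $M_d$; it suffices to treat $d=1$, since composing with a reflection of $\Real^3$ preserves $E_{F,\varepsilon}$ and reverses the sign of the Hopf charge, so that the reflection induces an energy--preserving bijection $M_1\to M_{-1}$. By Lemma \ref{lem:energygrowth} the infimum $E_{F,\varepsilon,1}$ is finite; fix a minimizing sequence $\{u_i\}\subset M_1$, so that $E_F(u_i)\le C$ and $\int_{\Real^3}\varepsilon\abs{\nabla u_i}^4\,dx\le C$ uniformly in $i$. Following the decomposition in the proof of Lemma \ref{lem:split}, I would apply Lemma \ref{lem:cube} to each $u_i$, group the cubes of nonzero local charge into clusters $S_1,\dots,S_l$ according to which inter--centre distances stay bounded, recentre each cluster by a translation, and extract a subsequence along which the translated maps converge weakly --- in the sense of Theorem \ref{thm:degree} --- to profiles $v_1,\dots,v_l$. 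The uniform bound on $\int\varepsilon\abs{\nabla v_{i,s}}^4$ passes to the limit, so $v_s\in M$; Lemma \ref{lem:cube}, Theorem \ref{thm:degree} and the argument in the proof of Lemma \ref{lem:split} (using Lemma 6.3 of \cite{LY1}) give integers $d_s=Q(v_s)$ with $\sum_{s=1}^l d_s=1$, hence $v_s\in M_{d_s}$. Since the clusters are pulled infinitely far apart, lower semicontinuity of $E_{F,\varepsilon}$ on disjoint balls gives, exactly as in the proof of Lemma \ref{lem:split},
\begin{equation*}
  E_{F,\varepsilon,1}=\lim_{i\to\infty}E_{F,\varepsilon}(u_i)\ \ge\ \sum_{s=1}^{l}E_{F,\varepsilon}(v_s)\ \ge\ \sum_{s=1}^{l}E_{F,\varepsilon,d_s}.
\end{equation*}

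The conclusion would then follow from a dichotomy on $l$. If $l=1$, then $v_1\in M_1$ with $E_{F,\varepsilon}(v_1)\le E_{F,\varepsilon,1}$, so $v_1$ is a minimizer. If $l\ge2$, write $a=\#\{s:d_s\ne0\}$; since $\sum d_s=1$ we have $a\ge1$. When $a=1$, the unique nonzero $d_s$ equals $1$, and the displayed chain forces $\sum_{t\ne s}E_{F,\varepsilon}(v_t)=0$ and $E_{F,\varepsilon}(v_s)=E_{F,\varepsilon,1}$, so $v_s$ is a minimizer. The hard part will be excluding the case $a\ge2$: there $1=d_1+\dots+d_m$ with $m\ge2$ and every $d_s\ne0$, and to contradict the display one needs the strict inequality $E_{F,\varepsilon,1}<E_{F,\varepsilon,d_1}+\dots+E_{F,\varepsilon,d_m}$. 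This is precisely where the idea of Ward \cite{Ward} and Lin--Yang \cite{LY2} enters --- a degree-$(d'+d'')$ field assembled from two far-apart hopfions can always be strictly lowered --- and where the hypothesis ``$\varepsilon$ very small'' is used: the strict inequality is first obtained for the genuine Faddeev energy $E_F$ and then transferred to $E_{F,\varepsilon}$ by letting $\varepsilon\to0$, using $E_{F,\varepsilon,k}\le E_{F,\varepsilon}(\phi_k)\to E_F(\phi_k)$ for the test maps $\phi_k$ of \cite{LY1} and the Vakulenko--Kapitanski bound $E_{F,\varepsilon,k}\ge c\abs k^{3/4}$ from \cite{VK}. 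A convenient way to package what is needed is the strict monotonicity $E_{F,\varepsilon,1}<E_{F,\varepsilon,k}$ for all $\abs k\ge2$ and all small $\varepsilon$; granting it, the case $a\ge2$ is impossible, since then $\sum_t E_{F,\varepsilon,d_t}>E_{F,\varepsilon,1}$ (if some $\abs{d_s}\ge2$, compare $E_{F,\varepsilon,d_s}>E_{F,\varepsilon,1}$; if all $\abs{d_s}=1$, then $\sum_t E_{F,\varepsilon,d_t}=m\,E_{F,\varepsilon,1}\ge2E_{F,\varepsilon,1}$).

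I would also point out a shortcut that bypasses the concentration--compactness bookkeeping altogether: the set $S$ of this section satisfies $S=-S$, again by composing with a reflection of $\Real^3$. If $1\notin S$, then $-1\notin S$ as well, and Lemma \ref{lem:split} applied to $d=1$ produces $l>1$ and integers $d_1,\dots,d_l\in S$ with $\abs{d_s}\le1$ and $d_1+\dots+d_l=1$; but then each $d_s\in\{-1,0,1\}\cap S=\{0\}$, so $d_1+\dots+d_l=0\ne1$, a contradiction. Hence $\pm1\in S$, i.e.\ $E_{F,\varepsilon}$ attains its infimum on $X_{\pm1}$, and since $E_{F,\varepsilon}\equiv+\infty$ on $X_{\pm1}\setminus M_{\pm1}$ that minimizer lies in $M_{\pm1}$, which is the assertion. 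This shortcut works for every $\varepsilon>0$; the cautious hypothesis ``$\varepsilon$ very small'' then seems to be needed only for the direct Ward--Lin--Yang route above, or for the normalization $\int\varepsilon\abs{\nabla u}^4\le1$ in Lemma \ref{lem:cube}.
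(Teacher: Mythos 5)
Your main route (minimizing sequence, cubic decomposition, clustering, profiles $v_s$ with $\sum d_s=1$, and the chain $E_{F,\varepsilon,1}\ge\sum_s E_{F,\varepsilon}(v_s)\ge\sum_s E_{F,\varepsilon,d_s}$) is exactly the paper's strategy, but the decisive step --- ruling out a genuine splitting --- is left as a black box and the way you propose to fill it does not work. Your ``convenient packaging'' $E_{F,\varepsilon,1}<E_{F,\varepsilon,k}$ for all $\abs{k}\ge 2$ does not follow from the two bounds you cite: the Vakulenko--Kapitanski bound gives only $E_{F,\varepsilon,2}\ge 3^{3/8}\,8\sqrt{2}\pi^2\cdot 2^{3/4}\approx 20.3\sqrt{2}\pi^2$, which is well below the Ward upper bound $E_{F,\varepsilon,1}\le 32\sqrt{2}\pi^2+0.01$, so the single-term comparison ``if some $\abs{d_s}\ge2$, compare $E_{F,\varepsilon,d_s}>E_{F,\varepsilon,1}$'' is unjustified. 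What the paper actually does is compare the degree-one \emph{upper} bound against the \emph{sum} of lower bounds over all nonzero parts: any decomposition $1=\sum d_s$ with at least two nonzero parts satisfies $\sum_s\abs{d_s}^{3/4}\ge 1+2^{3/4}$, and the point is the numerical inequality $3^{3/8}\cdot 8\cdot(1+2^{3/4})\approx 32.4>32+0.01/(\sqrt{2}\pi^2)$. This is where ``$\varepsilon$ small'' enters (so that the explicit test map $\Phi$, which has finite $\int\abs{\nabla\Phi}^4$, keeps $E_{F,\varepsilon,1}$ below $32\sqrt{2}\pi^2+0.01$); it is a tight, explicitly checked computation, not a soft ``two far-apart hopfions can always be strictly lowered'' principle --- no such strict sub-additivity is proved in the paper or in \cite{LY2}, and asserting it would be a substantially harder claim.

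Your shortcut is also unsound. It rests entirely on the clause in Lemma \ref{lem:split} that the parts satisfy $\abs{d_s}\le\abs{d}$, but that clause cannot be relied upon: the local degrees $k_{i,j}$ produced by the cubic decomposition are controlled by the energy (via $\sum_j\abs{k_{i,j}}\le C(E_F(u))$), not by the total degree, so for $d=1$ the clusters may well carry degrees like $3$ and $-2$. Indeed the paper's own proof of this very theorem explicitly enumerates decompositions such as $1=3+(-2)$, which contradicts $\abs{d_s}\le 1$; if that clause were available, the whole numerical argument would be unnecessary. So the dichotomy on $l$ and the case $a=1$ are fine, but to close the case $a\ge2$ you must carry out the explicit comparison $3^{3/8}\,8\sqrt{2}\pi^2\sum_s\abs{d_s}^{3/4}>E_{F,\varepsilon,1}$ for every admissible decomposition, as the paper does.
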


The proof relies on Lemma \ref{lem:split}, an upper bound estimate of $E_{F,\varepsilon;1}$ and a lower bound estimate of $E_{F,\varepsilon;d}$ for all $d$.

As for the lower bound, it was proved by Vakulenko and Kapitanski \cite{VK} and improved by Lin and Yang \cite{LY2} that
\begin{equation}
    E_{F,\varepsilon}(u)\geq E_F(u)\geq 3^{3/8}8\sqrt{2}\pi^2 \abs{\deg u}^{3/4}.
    \label{eqn:lower}
\end{equation}
The upper bound was proved by Ward \cite{Ward} and Lin and Yang \cite{LY2} for Faddeev energy
\begin{equation}\label{eqn:upper}
    E_{F,0;1}\leq 32\sqrt{2}\pi^2.
\end{equation}
\begin{rem}\label{rem:upper}
    The proof in \cite{Ward} and \cite{LY2} was indirect. In fact, as pointed out by the authors,
     the test map $\Phi$ is nothing but a Hopf map from ball of radius $\frac{1}{\sqrt{2}}$ composed with a stereographic projection.
     We may calculate directly that the Faddeev energy of $\Phi$ is $32\sqrt{2}\pi^2$, which gives the upper bound.
     Moreover, by direct computation, the $L^4$ norm of gradient of this map is bounded. We list below some intermediate results of this calculation.
\end{rem}

The test map $\Phi:\Real^3\to S^2\subset \Real^3$ is given by (see (4.13) in \cite{LY2})
\begin{equation*}
    \Phi(x,y,z)=\left(
    \begin{array}[]{c}
        \frac{4a}{(r^2+a^2)^2}(2axz+(r^2-a^2)y) \\
        \frac{4a}{(r^2+a^2)^2}(2ayz-(r^2-a^2)x) \\
        1-\frac{8a^2}{(r^2+a^2)^2}(z^2-r^2)
    \end{array}
    \right),
\end{equation*}
where $a=1/\sqrt{2}$ and $r^2=x^2+y^2+z^2$.

One can calculate
\begin{equation*}
    \abs{\nabla \Phi}^2=\frac{64}{(1+4 r^2)^2}.
\end{equation*}
Integrating over $\Real^3$ ,
\begin{equation*}
    \int_{\Real^3} \abs{\nabla \Phi}^2 dx=16\sqrt{2}\pi^2.
\end{equation*}
Moreover, we can find out that
\begin{equation*}
    \int_{\Real^3} \abs{\nabla \Phi}^4 dx=128\sqrt{2}\pi^2.
\end{equation*}
It is important for us that this number is finite.
For the other part of the energy,
\begin{equation*}
    \abs{\partial_x \Phi\times \partial_y \Phi}^2=\frac{1024(1-2x^2-2y^2+2z^2)^2}{(1+2r^2)^6}
\end{equation*}
and
\begin{equation*}
    \sum_{1\leq k<l\leq 3} \abs{\partial_k \Phi\times \partial_l \Phi}^2=\frac{1024}{(1+2r^2)^4}.
\end{equation*}
Integrating over $\Real^3$,
\begin{equation*}
    \int_{\Real^3} \sum_{1\leq k<l\leq 3} \abs{\partial_k \Phi\times \partial_l \Phi}^2 dx=32\sqrt{2}\pi^2.
\end{equation*}
In summary,
\begin{equation*}
    E_F(\Phi)=16\sqrt{2}\pi^2+\frac{1}{2}32\sqrt{2}\pi^2=32\sqrt{2}\pi^2.
\end{equation*}

Therefore, when $\varepsilon$ is very small, we may assume
\begin{equation*}
    E_{F,\varepsilon,\pm 1}\leq 32\sqrt{2}\pi^2+0.01.
\end{equation*}

For such $\varepsilon$, we can show that $\pm 1\in S$. In fact, if otherwise,
we would have a decomposition of $d=1$. If it is $1=1+(-2)$, then the energy inequality in
Lemma \ref{lem:split} says
\begin{equation}\label{eqn:energy}
    32\sqrt{2}\pi^2+0.01\geq 3^{3/8}8\sqrt{2}\pi^2 (1+2^{3/4}),
\end{equation}
which is not possible by direct computation. There are other decompositions, such as $1=1+1+(-1)$, $1=3+(-2)$ and so on.
It is easy to see that the corresponding right hand sides of (\ref{eqn:energy}) are even larger
so that the inequality is not possible for all decomposition. Therefore, $1\in S$ and so is $-1$.

We now prove Theorem 1.4.

\begin{proof}[Proof of Theorem 1.4]
    Let $\varepsilon_j$ be a sequence of positive number going to zero. Let $u_j$ be the minimizer of $E_{F,\varepsilon_i}$ in $M_1$.

    We can now apply Lemma \ref{lem:cube} to $u_j$. The same argument as in Lemma \ref{lem:split} works also for $u_j$. If $l=1$ which means there is no splitting at all, we get a limiting map $u$. $E_F(u)$ is finite due to the weakly lower semicontinuity of $E_F$. By Theorem \ref{thm:degree}, we know that the Hopf degree of $u$ is an integer. Since there is no splitting, we infer from the construction of $u$ that $Q(u)$ is $\delta_0$ close to $1$. Therefore, $u\in X_1$ and by the definition of relaxed energy
    \begin{equation}\label{eqn:one}
        \lim_{j\to \infty} E_F(u_j)\geq \tilde{E}_F(u)\geq  \inf_{w\in X_1} \tilde{E}_F(w).
    \end{equation}
    On the other hand, for any positive $\delta>0$, we can find $w\in X_1$ such that
    \begin{equation*}
        \tilde{E}_F(w)\leq \inf_{w\in X_1}\tilde{E}_F(w) +\delta.
    \end{equation*}
    By the definition of relaxed energy, we can find $w'\in M_1$ such that
    \begin{equation*}
        E_F(w')\leq \inf_{w\in X_1}\tilde{E}_F(w) +\delta.
    \end{equation*}
    For this fixed $w'$, $\int_{\Real^3}\abs{\nabla w'}^4 dx$ is finite. therefore, when $\varepsilon_j$ is sufficiently small,
    \begin{equation}\label{eqn:two}
        E_F(u_j)\leq E_F(w')+\delta \leq \inf_{w\in X_1}\tilde{E}_F(w)+3\delta.
    \end{equation}
    By equation (\ref{eqn:one}) and (\ref{eqn:two}), $u$ achieves the minimum of $\tilde{E}_F$ in $X_1$.

    Next, we will prove that the case $l>1$ is not possible. If otherwise, there exists $d_1,\cdots,d_l\in \mathbb Z$ and $x_{j,1},\cdots,x_{j,l}\in \Real^3$ such that
    \begin{equation*}
      1=d_1+\cdots+d_l
    \end{equation*}
    and
    $u_j(x-x_{j,s})$ converges weakly to $v_s$. These $v_s$ has finite energy and by Theorem \ref{thm:degree} the Hopf degree of $v_s$ is the integer $d_s$. For any fixed $R>0$, when $j$ is sufficently large so that $B(R,x_{j,s})$ are disjoint for different $s$,
    \begin{equation*}
      E_F(u_j)\geq \sum_{s=1}^l E_F(u_j,B_R(x_{j,s})).
    \end{equation*}
    Let $j\to \infty$,
    \begin{equation*}
      \inf_{w\in X_1}\tilde{E}_F(w)\geq \sum_{s=1}^l E_F(v_s,B_R(0)).
    \end{equation*}
    Since $R$ is arbitrary,
    \begin{equation*}
      \inf_{w\in X_1}\tilde{E}_F(w)\geq \sum_{s=1}^l E_F(v_s).
    \end{equation*}
    Since $v_s$ has bounded energy and $Q(v_s)=d_s$, the energy lower bound (\ref{eqn:lower}) is valid for $E_F(v_s)$.
    Moreover, because $\Phi$ in Remark \ref{rem:upper} is in $M_1$, the same test function implies upper bound for relaxed energy
    \begin{equation*}
      \inf_{w\in X_1}\tilde{E}_F(w)\leq 32\sqrt{2}\pi^2.
    \end{equation*}
    For the same reason as in Theorem \ref{thm:one}, the splitting is  not possible.

   Due to (\ref{eqn:weclaim}) and (5.5), we have
    \begin{equation*}
        \inf_{u\in H_1} E_F(u)\geq \min_{u\in X_1} \tilde{E}_F(u).
    \end{equation*}
    (1.6) follows from the definition of the relaxed energy $\tilde{E}_F$.
\end{proof}

\begin{acknowledgement}
 {The research  of the second author and the third author was supported by the Australian Research Council
grant DP0985624. Partial work was done when the first author
visited the University of Queensland in August 2009.}
\end{acknowledgement}

\end{document}